
 \documentclass[leqno,12pt]{article}

\usepackage[margin = 2cm]{geometry}
\usepackage{amsmath,amssymb,amsthm}
\usepackage{mathrsfs}
\usepackage{multirow}
\usepackage{diagbox}
\usepackage{graphicx}
\usepackage{color}
\usepackage{hyperref}
\usepackage{natbib}
\usepackage{caption}
\usepackage{subcaption}
\usepackage{float}
\usepackage[table]{xcolor}


\newtheorem{theorem}{Theorem}[section]

\newtheorem{assumption}[theorem]{Assumption}

\newtheorem{remark}[theorem]{Remark}
\newtheorem{proposition}[theorem]{Proposition}

\newtheoremstyle{definition}{}{}{}{}{\bfseries}{}{ }{}
\theoremstyle{definition}

\theoremstyle{remark}


\def\th{\hat{\tau}_{j, \lambda}}
\def\t{\tau_j}
\def\vjs{v_j(s)}
\def\vjt{v_j(t)}
\def\vhs{\hat{v}_{j, \lambda}(s)}
\def\vht{\hat{v}_{j, \lambda}(t)}
\def\cx{c(s,t)}
\def\cxh{\hat{c}(\lambda, s,t)}

\textwidth 17cm
\textheight 222mm

\numberwithin{equation}{section}

\renewcommand{\theequation}{\arabic{section}.\arabic{equation}}

\allowdisplaybreaks

\begin{document}

\title{Detecting structural breaks in eigensystems of functional time series}

\author{
{\small Holger Dette, Tim Kutta} \\
{\small Ruhr-Universit\"at Bochum} \\
{\small Fakult\"at f\"ur Mathematik}\\
{\small Bochum, Germany} \\
}

  \maketitle

\begin{abstract}
Detecting structural changes in functional data  is a prominent topic in statistical literature. However not all trends in the data are important  in applications, but only those of large enough influence. In this paper we address the problem of identifying relevant changes in the eigenfunctions and eigenvalues of covariance kernels of  $L^2[0,1]$-valued   time series. By self-normalization techniques  we  derive pivotal, asymptotically consistent tests for relevant changes in these characteristics of the second order structure and investigate their finite sample properties in a simulation study. The applicability of our approach is demonstrated analyzing    German annual temperature data.
\end{abstract}

Keywords: functional time series,   relevant changes, eigenfunctions, eigenvalues, self-normalization

\maketitle

\bigskip


\section{Introduction}
\label{intro}

The analysis of functional data has gained increasing attention during
the past decades, due to recent advances in computing  and data collecting technologies. This surging interest is testified by a rapidly expanding scope of new statistical methods, as reviewed in the monographs of  \cite{bosq2000},
\cite{RS2005},  \cite{FerratyVieu2010}, \cite{HK2012}  and  \cite{hsingeubank2015}.

Applications of functional data analysis  include such diverse topics  as   imaging,  meteorology,
genomics, and economics. The analytical link between these fields lies in modelling observations as random functions, whether they are temperature curves or stock prices.  
While this approach facilitates the data's interpretation for users, it in exchange poses theoretical
challenges, since each observation is now 
 an element of a complex function space.  


Consequently dimension reducing procedures play a key role in this field, as they 
 make functional data amenable to the methods of finite dimensional statistics. 
 Among these,  \textit{functional principal component analysis} (fPCA) has taken the most prominent position. As principal component analysis (PCA) for finite dimensions,
fPCA is based on  projecting the data on  linear subspaces that explain  most of  its
variance. These spaces are spanned by the eigenfunctions  of the
estimated covariance operator.   An overview of the mathematical aspects of this
procedure can be found in the monographs of \cite{RS2005} and \cite{HK2012} and in the survey of \cite{Shang2014ASO}. 
Recently,  \cite{adh}   used functional principal components  for prediction in functional autoregressive models,
\cite{SHANG2017184} constructed forecasts  with dynamic updating based on  functional principal component regression
and \cite{GAO2019232} proposed {\it dynamic} fPCA for forecasting mortality rates.
Obviously dimension reducing procedures rest upon the assumption of "stable" eigensystems, i.e. that most of the variance of all data is confined to the same, low-dimensional subspace. 
This insight has furnished interest in methods to validate this assumption.

One option to investigate the stability of the eigensystem  is change point analysis, where one is  monitoring  a  functional time series
 for structural breaks  in the corresponding covariance  operators.
 In particular,  \cite{aston2012}  
  and  \cite{stoehr2019}   develop a powerful methodology to detect
  changes in the covariance operator. Similarly changes in the cross-covariance operator have been investigated by \cite{RS2019}. 
 %
%
    However, with the specific focus on fPCA
 it might be reasonable to conduct a refined analysis and  to   search   directly for changes in the eigenfunctions and eigenvalues of the covariance operator. Yet,   not much literature can be found in this direction.

The present paper contributes to this problem  in two respects.  On the one hand we develop a new  statistical methodology
for change point analysis of the eigenvalues and  eigenfunctions of a covariance operator of a functional times series. While    a test for a  change  in the spectrum of the covariance operator
has already been proposed by  \cite{ARS2018}, we are - to the  best of our  knowledge -
not aware of any procedure addressing the  problem of change point detection in the eigenfunctions corresponding to a sequence of functional data.
On the other hand  -  in contrast to the cited work, which has its  focus on the ``classical hypotheses'' of  strict equality-
we propose tests for  ``precise''
hypotheses as introduced in \cite{bergdela1987}. 
This means that  we are NOT interested in asserting arbitrarily small differences in the eigensystem before and after the change versus the hypothesis of exact equality. Rather, we try to detect or reject changes of relevant magnitude. 

For  example, if $\{X_{n} \}_{n=1, \ldots , N} $ is a functional time series  and  $\tau_{1,n}$ denotes the maximal eigenvalue  of the covariance operator  of  
$X_{n}$,  this means that  - in  contrast to  \cite{ARS2018}  - we  do not consider the null hypothesis $\tau_{1,1} = \ldots  = \tau_{1,N} $,
but  develop   a  statistical  methodology to test      the  hypothesis of no relevant deviation of the eigenvalue before and after the change point, that is
$$
H_0: |\tau^{(1)}_{1}-\tau^{(2)}_{1} |^2\le \Delta_\tau  \quad vs.  \quad H_1: | \tau^{(1)}_{1}- \tau^{(2)}_{1}
|^2> \Delta_\tau.
$$
Here   $\tau^{(1)}_{1}:= \tau_{1,1} = \ldots = \tau_{1,\lfloor N \theta_{0 } \rfloor } \not =  \tau _{1,\lfloor N \theta_{0 }  \rfloor +1 } = \ldots = \tau_{1,N} =:\tau^{(2)} _{1}$
for some $\theta_{0 }\in (0,1)$ and $\Delta_\tau$  is a given threshold defined by the concrete application (under the null hypothesis the  change is considered as not relevant).
The hypotheses regarding the other eigenvalues and  eigenfunctions are formulated similarly (see equations \eqref{H0H1eigenvalue} and \eqref{H0H1eigenvector} for more details).

The consideration of relevant hypotheses in the context of change point detection has been  introduced in   \cite{detteWied2015} and is
motivated by the observation that in many applications one is  not interested in small changes  of a parameter. 
For example,
in forecasting of functional times series, it  is  not  reasonable
to use only part of the data if a structural break in an eigenvalue (or  eigenfunction) is detected, but the  difference
before and after the change is rather small. 
In this case discarding the  data before the change could admittedly reduce the prediction bias, but come at the cost of a substantially increased variance due to a smaller sample size
used for prediction.

Relevant hypotheses have   been considered in statistics to different degrees since the mid 20th century (see
\cite{Hodges1954}), and have been investigated intensively in biostatistics, where tests for ``bioequivalence'' of certain drugs have nowadays become
standard (see for example  \cite{wellek2010}). In the context of change point analysis for functional data
relevant hypotheses have recently  been considered
by \cite{dettkokaue2019}  for Banach-space valued random variables and  by  \cite{DKV2018}
in Hilbert spaces. The first named paper concentrates on inference regarding the mean functions
while  \cite{DKV2018}  developed tests for a relevant structural break in  the  mean function and in the covariance operator.
The detection of   structural breaks in the eigenvalues  and  eigenfunctions is a  substantially more difficult  problem
due to their  implicit definition and  statistical tests have mainly been developed for the two sample case
(see  \cite{zhang2015}, who consider  classical hypotheses  and  \cite{auedettrice2019}, who discuss relevant hypotheses).

The aim of the present work is to develop  statistical methodology for detecting relevant changes in the eigensystem of a functional time series.
In Section \ref{sec2} we introduce the testing problems,  define corresponding test statistics  and give the main
theoretical results. Typically in change point  problems of this type estimation of the  long run covariance structures is required, which is
nearly intractable in the present context, because it involves all eigenvalues and eigenfunctions of the covariance operators before and after the change point
(see, for example, \cite{dauxois1982asymptotic} or \cite{Hall2006} for an explicit representation of the estimated eigenvalues  and  eigenfunctions  in terms of the empirical
covariance operator). We propose a self-normalization approach which avoids this problem.
In Section \ref{sec3} we illustrate our approach by virtue of a
small simulation study, as well as the investigation of the German weather data. Finally, in Appendices \ref{appA} and \ref{appB} we
provide the proofs of our findings and also give some auxiliary results.


\section{Testing for relevant changes}
\label{sec2}
In this section we provide a precise outline of the testing problems considered in this paper and
subsequently present the main theoretical results.
Let $L^2[0,1]$ denote the Hilbert space of square integrable functions $f:[0,1] \to
\mathbb{R}$  equipped with the common inner product
$$\big< f, g\big>:=\int_0^1 f(t) g(t) dt \quad f, g \in L^2[0,1]. $$
The corresponding norm is denoted by $\|\cdot\|$. Notice
that according to the induced metric, functions that differ only on a set of
Lebesgue mass $0$ are identified.

Now suppose we observe a sample of $N$ random functions $X_1,...,X_N \in
L^2[0,1]$, where for any $n \in \{1,...,N\}$
\begin{equation}
X_n:= \begin{cases}
X_n=X_n^{(1)}, & n \le \theta_0 N\\
X_n=X_n^{(2)}, & n > \theta_0 N.
\end{cases} \label{assL2}
\end{equation}

Here $(X_n^{(1)})_{n \in \mathbb{Z}}$ and $(X_n^{(2)})_{n \in \mathbb{Z}}$ are
stationary sequences of centered, random functions in $L^2[0,1]$ and $\theta_0 \in (0,1)$
is a constant of proportionality. The assumption of vanishing expectations is made  for the sake of a simple notation and all results presented in this paper hold in the case $\mathbb{E}X_1^{(i)}=\mu^{(i)}$ for some $\mu^{(i)} \in L^2[0,1]$ $(i=1,2)$.  For a more detailed discussion of this case see Remark \ref{remark}.
 A general definition of expectations of random functions in
$L^2[0,1]$ can be found in \cite{BDH2018}. However in the subsequent discussion
we will always assume that
\begin{equation}\label{h1a}
   \mathbb{E}\|X_1^{(i)}\|^2<\infty, \quad \textnormal{for} \,\, i=1,2,
   \end{equation}
   which implies that expectations can be defined point-wise  (compare
\cite{HK2012}, Section 2.2).

Under assumption \eqref{h1a} the covariance kernel $c^{(i)}$ of $X_1^{(i)}$ ($i=1,2$) is almost everywhere defined and given by
$$c^{(i)}(s,t)=\mathbb{E}[X_1^{(i)}(s)X_1^{(i)}(t)]. $$
Regarded as a   function with two arguments it  is an element of $L^2([0,1]\times
[0,1])$, the space of square integrable functions on the unit square, which can be isomorphically identified with the tensor product Hilbert space $L^2[0,1]
\otimes L^2[0,1]$ (for details see \cite{JW1980}). We will also denote the induced norm of this space  by $\| \cdot\|$, since it will always be clear from the context, which space we refer to.
By Mercer's theorem (see \cite{Koenig1986} p.145) the kernels $c^{(1)}$ and $c^{(2)}$ permit the $L^2$-expansions
$$c^{(i)}(s,t)= \sum_{j \ge 1} \tau_j^{(i)} v_j^{(i)}(s)v_j^{(i)}(t), \quad
(i=1,2),$$
where  $v_1^{(i)}, v_2^{(i)},...\in L^2[0,1]$ are the eigenfunctions of  the
integral operator

\begin{equation}
f \mapsto \int_0^1 c^{(i)}(\cdot, t) f(t) dt, \quad f \in L^2[0,1]   \label{integraloperator}
\end{equation}

and $\tau_1^{(i)},\tau_2^{(i)},...$ are the corresponding eigenvalues.  For simplicity of
reference we assume for some fixed  $p \in \mathbb{N}$ that the first $p+1$
eigenvalues are the largest and that they are arranged in descending order, i.e. $
\tau_1^{(i)}\ge ...\ge \tau_{p+1}^{(i)}$. Furthermore the sets of eigenfunctions are
supposed to form orthonormal bases of the space $L^2[0,1]$, which can always be
enforced by adding further, orthogonal functions (with corresponding eigenvalues
$0$).

Based on the sample of observations $X_1,...,X_N$ we want to investigate
relevant changes in the eigensystems corresponding to $c^{(1)}$ and $c^{(2)}$. More
precisely, for some $j \in \{1,...,p\}$ we test whether the difference of the $j$-th
eigenvalues $\tau_j^{(1)}$
and $\tau_j^{(2)}$ or the $j$-th eigenfunctions  $v_j^{(1)}$ and $v_j^{(2)}$ exceeds a predetermined threshold. To be precise,  we consider for a fixed index $j \in \{1,...,p\}$ the hypotheses
\begin{equation}H_0: (\tau_j^{(1)}-\tau_j^{(2)})^2\le \Delta_\tau \quad vs.  \quad H_1:(
\tau_j^{(1)}-\tau_j^{(2)})^2> \Delta_\tau \label{H0H1eigenvalue}
\end{equation}
and
\begin{equation}
H_0: \|v_j^{(1)}-v_j^{(2)}\|^2\le \Delta_v  \quad vs.  \quad H_1:\|v_j^{(1)}-v_j^{(2)}
\|^2> \Delta_v.  \label{H0H1eigenvector}
\end{equation}
Here $\Delta_\tau$ and $\Delta_v$ are prespecified constants, denoting the maximal values for which the distances between the eigenvalues and eigenfunctions  are still considered scientifically irrelevant. The particular choice of $\Delta_\tau$ and $\Delta_v$ depends on the concrete application.
Note also that for $\Delta_\tau=0$ or $\Delta_v=0$ the hypotheses (\ref{H0H1eigenvalue}) and (\ref{H0H1eigenvector}) reduce to the classical change point detection problems for eigenvalues and eigenfunctions respectively.

In order to decide whether a relevant change either in the eigenvalues  or in the eigenfunctions
 has occurred we first have to identify the change point $\theta_0$.

\subsection{Change point estimation} The change point estimator is constructed by the CUSUM principle and defined by
\begin{equation}
\hat{\theta}:= \frac{1}{N} \textnormal{argmax}_{N\varepsilon \le k \le  N (1-
\varepsilon)} f(k), \label{changepointestimator}
\end{equation}
where the function $f$ is given by
\begin{align*}
f(k):= \frac{k(N-k)}
{N^2} \,\int_0^1 \int_0^1 \Big[ \frac{1}{k}\sum_{n=1}^k X_n(s)X_n(t)
-\frac{1}{N-k}& \sum_{n=k+1}^N X_n(s)X_n(t) \Big]^2 dsdt.
\end{align*}

Note that in definition \eqref{changepointestimator} we confine the maximization of $f$ to a subset of $\{1,...,N\}$ to obtain stable estimators. In practice this restriction is not an issue and even very small values for $\varepsilon$ can be used in \eqref{changepointestimator}.
We refer to   Section \ref{sec33}), where we demonstrate the stability of the estimator with respect to the choice of the threshold $\varepsilon$ by means of a simulation study. Before we  proceed we specify the basic
assumptions required for the theoretical statements presented in this paper.

\begin{assumption}\label{assumption1}
Let $i=1,2$.
\begin{itemize}
\item[1.] The sequence of random functions $(X_n^{(i)})_{n \in \mathbb{Z}}$
consists of Bernoulli shifts, i.e.
$$X_n^{(i)} =g^{(i)}(\epsilon_n, \epsilon_{n-1},...)  $$
for some measurable, non-random function $g^{(i)}:S^\infty \to L^2[0,1]$ and i.i.d.
innovations $\{ \epsilon_n \}_{n \in \mathbb{Z}}$ with values in a measurable space  $S$.
\item[2.]  All random variables $X_n^{(i)}(t, \omega)$ are jointly measurable in $(t,
\omega)$ for all $n \in \mathbb{Z}$.
\item[3.] $\mathbb{E}X^{(i)}_0=0$ and $\mathbb{E}\|X^{(i)}_0\|^{4+
\delta}<\infty$, for some $\delta \in (0,1)$.
\item[4.]
The random functions $X^{(i)}_n$ are  $m$-approximable.
In other words: for all  $m \in \mathbb{N}$ there exist i.i.d.  copies
$\{\epsilon_{n, m, k}^{*}\}_{k \in \mathbb{Z}}$  of $\epsilon_0$, independent of
$\{\epsilon_j\}_{j \in \mathbb{Z}}$, such that the   $m$-dependent random
functions $X_{n,m}^{(i)}$
\begin{equation}
X_{n,m}^{(i)} := g^{(i)}(\epsilon_n,...,\epsilon_{n-m+1},   \epsilon_{n, m, n-m}^{*},\epsilon_{n, m, n-
m-1}^{*},... ) 
\label{m-dependentRV}
\end{equation}
satisfy
\begin{equation}\label{h2}
 \sum_{m \ge 1} (\mathbb{E}\|X^{(i)}_n-X^{(i)}_{n,m}\|^{4+\delta})^{1/\kappa}<
\infty,
\end{equation}
for some $\delta \in (0,1)$ and $\kappa>4+\delta$.
\end{itemize}
\end{assumption}

Note that these assumptions match those in \cite{Berkes2013}, who derived weak invariance principles for $m$-approximable sequences. However, the stronger summability condition \eqref{h2}  is imposed here, since we are not estimating
mean functions, but   covariance kernels. We now state a first result concerning the
convergence rate of the change point estimator defined in \eqref{changepointestimator}.
The proof follows by similar arguments as given in the proof of  Proposition 3.1 in
\cite{DKV2018}, which are  omitted for the sake of brevity.

\begin{proposition} \label{Proposition2}
Suppose that Assumption 2.1 holds and that $c^{(1)} \neq c^{(2)}$. If $\varepsilon < \min \{\theta_0, 1-\theta_0 \}$, then
$$\hat{\theta}=\theta_0+o_P(1/\sqrt{N}). $$
\end{proposition}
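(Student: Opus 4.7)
\bigskip

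\noindent\textbf{Proof proposal.} The plan is to follow the four-step template used for Proposition~3.1 of \cite{DKV2018}: compute the population limit of $f$, show it is uniquely maximized at $\theta_0$ with a \emph{kink}, control the stochastic fluctuations uniformly, and finally exploit the kink and an asymptotic continuity argument to get the rate.

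\emph{Step 1 (Population limit).} Let $Y_n := X_n \otimes X_n$ and $Z_n := Y_n - \mathbb{E}[Y_n]$, viewed as elements of $L^2([0,1]^2)$, and set $\bar Y_{[a:b]} := (b-a+1)^{-1}\sum_{n=a}^b Y_n$, similarly for $\bar Z_{[a:b]}$. A direct expansion yields
\begin{equation*}
f(k) = \frac{k(N-k)}{N^2}\,\bigl\|\bar Z_{[1:k]} - \bar Z_{[k+1:N]} + \mu_k\bigr\|^2,\qquad \mu_k := \mathbb{E}\bar Y_{[1:k]} - \mathbb{E}\bar Y_{[k+1:N]}.
\end{equation*}
Computing $\mu_k$ under the piecewise-stationary model \eqref{assL2}, one finds, with $\theta = k/N$, that $f$ admits the deterministic analogue
\begin{equation*}
M(\theta) := \|c^{(1)}-c^{(2)}\|^2 \cdot \begin{cases} \theta(1-\theta_0)^2/(1-\theta), & \theta \le \theta_0, \\ \theta_0^2(1-\theta)/\theta, & \theta > \theta_0, \end{cases}
\end{equation*}
which is strictly increasing on $[\varepsilon,\theta_0]$, strictly decreasing on $[\theta_0,1-\varepsilon]$, and has a kink at $\theta_0$ with one-sided slopes $\pm\|c^{(1)}-c^{(2)}\|^2$. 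Since $c^{(1)}\neq c^{(2)}$, there is $c_0>0$ with $M(\theta_0) - M(\theta) \ge c_0|\theta - \theta_0|$ on $[\varepsilon,1-\varepsilon]$.

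\emph{Step 2 (Uniform stochastic control).} Under Assumption~\ref{assumption1}, the tensor sequence $(Z_n)_{n\in\mathbb{Z}}$ inherits $L^2$-$m$-approximability in the Hilbert--Schmidt norm with finite second moment, so its partial-sum process $S_k := \sum_{n=1}^k Z_n$ satisfies the functional central limit theorem of \cite{Berkes2013}. Writing $\bar Z_{[1:k]} - \bar Z_{[k+1:N]} = (NS_k - kS_N)/(k(N-k))$ and expanding $\|\cdot\|^2$ in the definition of $f(k)$, the cross term becomes $\tfrac{2}{N}\langle S_k - \theta S_N,\, \mu_k\rangle$, which is $O_P(N^{-1/2})$ uniformly, while the purely quadratic remainder is $O_P(N^{-1})$. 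Hence
\begin{equation*}
\sup_{\theta\in[\varepsilon,1-\varepsilon]}\bigl|f(\lfloor N\theta\rfloor) - M(\theta)\bigr| = O_P(N^{-1/2}).
\end{equation*}

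\emph{Step 3 (Rate $O_P$, then $o_P$).} Set $R_N(\theta) := f(\lfloor N\theta\rfloor) - M(\theta)$. By the definition of $\hat\theta$ and the lower bound from Step~1,
\begin{equation*}
c_0|\hat\theta - \theta_0| \le M(\theta_0) - M(\hat\theta) \le R_N(\hat\theta) - R_N(\theta_0) \le 2\sup_\theta|R_N(\theta)| = O_P(N^{-1/2}),
\end{equation*}
which already gives consistency. To upgrade to $o_P(N^{-1/2})$, observe that the FCLT representation in Step~2 gives
$\sqrt{N}\, R_N(\theta) = \phi(\theta)\, B_N(\theta) + o_P(1)$ uniformly in $\theta$, where $B_N(\theta) := N^{-1/2}\langle S_{\lfloor N\theta\rfloor} - \theta S_N,\, c^{(1)}-c^{(2)}\rangle$ converges weakly to a Gaussian process with continuous sample paths, and the scalar coefficient $\phi$ equals $2(1-\theta_0)/(1-\theta)$ for $\theta\le\theta_0$ and $2\theta_0/\theta$ for $\theta>\theta_0$; in particular $\phi$ is \emph{continuous} at $\theta_0$. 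Since $\hat\theta \xrightarrow{P} \theta_0$, a continuous-mapping/Skorokhod argument gives $\phi(\hat\theta)B_N(\hat\theta) - \phi(\theta_0)B_N(\theta_0) = o_P(1)$, hence $\sqrt{N}[R_N(\hat\theta) - R_N(\theta_0)] = o_P(1)$, and the inequality above yields $|\hat\theta - \theta_0| = o_P(N^{-1/2})$.

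The main obstacle is Step~4 inside Step~3: the global bound $\sqrt{N}R_N = O_P(1)$ does not by itself control the \emph{difference} $R_N(\hat\theta) - R_N(\theta_0)$ at the required $o_P(N^{-1/2})$ scale. One must (i) isolate the linear-in-$Z_n$ part of $R_N$ and check that its scalar coefficient has no jump at $\theta_0$ (the key cancellation between the $(1-\theta_0)/(1-\theta)$ and $\theta_0/\theta$ factors), and (ii) absorb the quadratic remainder using the summability \eqref{h2}. The higher moment order required in \eqref{h2}, compared to the mean-change setting, is what ultimately allows this absorption, since the relevant partial sums live at the level of second-order tensors $X_n\otimes X_n$ rather than of $X_n$ itself.
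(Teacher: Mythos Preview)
Your proposal is correct and follows exactly the approach the paper indicates: the paper omits the proof entirely, stating only that it ``follows by similar arguments as given in the proof of Proposition~3.1 in \cite{DKV2018},'' and your four-step argument (kink in the population limit $M$, uniform $O_P(N^{-1/2})$ control via the tensor-level invariance principle, then the asymptotic-equicontinuity upgrade to $o_P(N^{-1/2})$) is precisely that template transported to second-order quantities. The only point worth making explicit is that $(Z_n)$ is piecewise rather than globally stationary, so the invariance principle of \cite{Berkes2013} is applied separately on $[1,\lfloor N\theta_0\rfloor]$ and $[\lfloor N\theta_0\rfloor+1,N]$ before recombining; otherwise the argument is complete.
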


In the next step we partition the data into two subgroups $X_1,...,X_{N   \hat{\theta}}	
$ and $X_{N \hat{\theta}+1},...,X_N,$ which we then use to estimate the covariance kernels $c^{(1)} $ and $c^{(2)}$ respectively. To be precise, we define for $\lambda \in [0,1]$ the estimators
\begin{align}
\hat{c}^{(1)}(  \lambda, s, t)=& \frac{1}{\lfloor N \hat \theta  \lambda \rfloor} \sum_{n=1}^{\lfloor  N \hat \theta \lambda \rfloor} X_n(s)X_n(t)  \label{estimatedkernels1}\\
\hat{c}^{(2)}(  \lambda, s, t)=& \frac{1}{\lfloor (N- N \hat \theta)  \lambda \rfloor} \sum_{n= N\hat \theta  +1}^{N\hat{\theta}+\lfloor (N-N \hat \theta ) \lambda \rfloor}X_n(s)X_n(t) \label{estimatedkernels2}
\end{align}
and obtain $ \hat c^{(1)} (\cdot, \cdot) := \hat{c}^{(1)}(1, \cdot, \cdot)$ and $\hat c^{(2)} (\cdot, \cdot) :=\hat{c}^{(2)}(1, \cdot, \cdot)$ as estimators for $c^{(1)}$ and $c^{(2)}$ respectively. In \eqref{estimatedkernels1} and \eqref{estimatedkernels2} the quantity $\lambda \in [0,1]$ denotes a parameter used for the subsequent self-normalization. If the kernels are degenerate they are interpreted as $0$-functions.

 By Proposition \ref{Proposition2} the amount of
misspecified data is  small and therefore we expect the estimated kernels $\hat{c}^{(1)}(\cdot, \cdot)$  and $\hat{c}^{(2)}(\cdot, \cdot)$, and hence their eigensystems to be close to those of $c^{(1)} $
and $c^{(2)}$.

\subsection{Relevant changes in the eigenvalues}

We now proceed to construct a test for the hypothesis \eqref{H0H1eigenvalue} of a  relevant change in the $j$-th eigenvalue. For
this purpose we define  for $i=1,2$ and $\lambda \in [0,1]$ the eigenfunctions
\begin{equation}
\hat{v}_{1,  \lambda}^{(i)}, \hat{v}_{2,  \lambda}^{(i)},... \label{eigenvectors}
\end{equation}
and the eigenvalues
\begin{equation}
\hat{\tau}^{(i)}_{1, \lambda} \ge  \hat{\tau}^{(i)}_{2, \lambda} \ge... \label{eigenvalues}
\end{equation}
 of the estimates $\hat{c}^{(i)} (\lambda, \cdot,\cdot)$ (defined in  \eqref{estimatedkernels1} and \eqref{estimatedkernels2}). Finally we denote by 
 \begin{equation}\label{h3}
   \hat \tau^{(i)}_j := \hat \tau^{(i)}_{j,1} \ ; \quad \hat v^{(i)}_j := \hat v^{(i)}_{j,1} \quad \quad j=1,2,\ldots
 \end{equation}
 the eigensystems of the estimated covariance operators of the full samples 
 $X_{1}, \ldots , X_{\lfloor N \hat \theta \rfloor }$ ($i=1$) and  $X_{\lfloor N \hat \theta \rfloor + 1}, \ldots , X_{N}$ ($i=2$).
 Note that the eigenfunctions are only determined up to a sign.
 Additionally,  we define the estimated squared difference of the $j$-th eigenvalues by 
$$\hat{E}_{j,N}( \lambda):=(\hat{\tau}_{j, \lambda}^{(1)}-\hat{\tau}_{j,
\lambda}^{(2)})^2, \quad \lambda \in [0,1]. $$
In view of the testing problem in  \eqref{H0H1eigenvalue} the natural entity of
interest is the statistic
$$
    \hat{E}_{j,N}:=\hat E_{j,N}(1)  = \big ( \hat \tau^{(1)}_{j}  - \hat \tau^{(2)}_{j} \big)^2,
$$
where $\hat \tau^{(i)}_{j}$  is defined in \eqref{h3}.
The null hypothesis \eqref{H0H1eigenvalue} of no relevant change in the $j$-th eigenvalue is now rejected for large values of $\hat E_{j,N}$. To find critical values for such a test we  determine  the asymptotic distribution of $\hat E_{j,N}$, which presupposes the following standard identifiability assumption (see e.g. \cite{HK2012}, \cite{Hall2006}).

\begin{assumption} \label{assumption2}
The first $p+1$ eigenvalues of the covariance kernel $c^{(i)}$ satisfy $\tau_1^{(i)}>...>
\tau_{p+1}^{(i)}>0$ for $i=1,2$.
\end{assumption}

It will be shown in the Appendix that under the  Assumptions \ref{assumption1} and \ref{assumption2}   the
statistic $\hat E_{j,N}$ is asymptotically normal in the sense that
\begin{equation}\label{h1}
 \sqrt{N}\big\{ \hat E_{j,N} -E_{j}\big\}  \to_D \mathcal{N}(0, \sigma_E^2),
\end{equation}
where the symbol $\to_D$ denotes weak convergence,  $E_j:=(\tau_j^{(1)}-\tau_j^{(2)})^2$ is the squared (unknow) difference between the $j$-th eigenvalues of the kernels $c^{(1)}$ and $c^{(2)}$, and $\mathcal{N}(\mu, \sigma^2)$ denotes a normal distribution with mean $\mu$ and variance $
\sigma^2$. In particular, if $\sigma^2 = 0$ this distribution  is defined as the point measure with probability mass $1$ at the point $\mu$.  The
variance of the normal distribution in \eqref{h1}   can be decomposed as
\begin{equation}
\sigma_E^2=4|\tau_j^{(1)}-\tau_j^{(2)}|^2 \big[ (\sigma_E^{(1)})^2 \theta_0+ (\sigma_E^{(2)})^2 (1-\theta_0)\big] ,
\end{equation}
where $\sigma^{(1)}_E$ and $\sigma^{(2)}_E$ are  non-negative parameters depending in a very complicated way on the long run variance of the time series $\{X_n^{(1)}\}_{n \in \mathbb{Z}}$ and $\{X_n^{(2)}\}_{n \in \mathbb{Z}}$ and the spectrum and eigenfunctions of the covariance operators $c^{(1)}$ and $c^{(2)}$. A precise definition of these quantities can be found in equations \eqref{longrunvariance1e} and \eqref{longrunvariance2e} in   Appendix \ref{appA}. From these representations it can be seen that $\sigma^{(1)}_E$ and $\sigma^{(2)}_E$ are notoriously difficult to estimate. We
circumvent this problem using   self-normalization techniques. This concept has been introduced for change point detection in a seminal paper by \cite{shao2010} and since then been used by many
authors. While most of this literature  concentrates on classical change point problems, \cite{DKV2018} introduced a novel type of self-normalization for relevant hypotheses and used it to define a self-normalized test for a relevant change in the mean of a time series. In the following we will further develop  this concept to detect relevant changes in the spectrum. For this purpose we define a normalizing factor 
\begin{equation}
\hat{V}_{j,N}:= \Big[ \int_0^1\lambda^4 \big\{\hat E_{j,N}(\lambda)-\hat E_{j,N}( 1) \big\}^2 d\nu(\lambda)\Big]^{1/2}, \label{VNE}
\end{equation}
where $\nu$ is a probability measure on the interval $(0,1)$.
Even though the specific choice of $\nu$ in \eqref{VNE} is generally not influential, it
is numerically convenient to use a discrete measure in applications rather than some
mathematically more natural choice like the Lebesgue measure.

The next Proposition is the central building block to prove the feasability of the
normalization approach.

\begin{proposition}  \label{proposition2a}
Suppose that Assumptions \ref{assumption1} and \ref{assumption2} hold, that $\varepsilon < \min \{\theta_0, 1-\theta_0 \}$, that $
\sigma^{(i)}_E>0$ for $i=1,2$ and that $\tau_j^{(1)}\neq \tau_j^{(2)}$ for some $j \in \{ 1,\ldots,p\}$. Then the following
weak convergence holds
\begin{equation}
\sqrt{N}(\hat E_{j,N} -E_{j}, \hat{V}_{j,N}) \to_D  \Big(\sigma_E \mathbb{B}(1), \sigma_E \Big[\int_0^1
\lambda^2 (\mathbb{B}(\lambda)-\lambda \mathbb{B}(1))^2 \nu(d \lambda)
\Big]^{1/2}\Big), \label{VNlimit}
\end{equation}
  where $\mathbb{B}$ is a standard Brownian motion.
\end{proposition}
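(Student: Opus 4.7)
The strategy is to derive a functional central limit theorem for the process $M_N(\lambda) := \sqrt{N}\bigl(\hat E_{j,N}(\lambda) - E_j\bigr)$ and then invoke the continuous mapping theorem; both coordinates on the left-hand side of \eqref{VNlimit} are continuous functionals of $M_N$ through the map
\[ \Phi(f) = \Bigl( f(1),\, \Bigl[\int_0^1 \lambda^4 \bigl(f(\lambda) - f(1)\bigr)^2\, d\nu(\lambda)\Bigr]^{1/2}\Bigr). \]
A short computation confirms that $\Phi$ applied to the candidate limit $M(\lambda) = \sigma_E \mathbb{B}(\lambda)/\lambda$ reproduces the right-hand side of \eqref{VNlimit}, since $\lambda^4\bigl(M(\lambda) - M(1)\bigr)^2 = \sigma_E^2 \lambda^2 \bigl(\mathbb{B}(\lambda) - \lambda \mathbb{B}(1)\bigr)^2$, a quantity extending continuously by $0$ at $\lambda = 0$. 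Hence the task reduces to proving $M_N \to_D M$ weakly on a suitable path space on $(0,1]$.

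The FCLT is established in four stages. (i) By Proposition \ref{Proposition2} and Assumption \ref{assumption1}, replacing $\hat\theta$ by $\theta_0$ in \eqref{estimatedkernels1}--\eqref{estimatedkernels2} costs only $o_P(N^{-1/2})$ uniformly in $\lambda \in [\varepsilon',1]$, since the affected number of summands is $O_P(N|\hat\theta - \theta_0|) = o_P(\sqrt N)$ and each summand is uniformly integrable. (ii) The invariance principle for $m$-approximable sequences of \cite{Berkes2013}, applied to the tensor-valued series $X^{(i)}_n \otimes X^{(i)}_n - c^{(i)}$ whose $m$-approximability in $L^2([0,1]^2)$ follows from Assumption \ref{assumption1} via Cauchy--Schwarz together with the summability \eqref{h2} for $\kappa > 4+\delta$, yields weak convergence of the corresponding partial-sum processes and hence
\[ \sqrt{N}\bigl(\hat c^{(i)}(\lambda, \cdot, \cdot) - c^{(i)}\bigr) \to_D W^{(i)}(\theta_i \lambda)/(\theta_i \lambda), \qquad \theta_1 := \theta_0,\ \theta_2 := 1 - \theta_0, \]
with $W^{(1)}, W^{(2)}$ independent Brownian motions in $L^2([0,1]^2)$ whose covariances encode the long-run variance of the series. (iii) Under Assumption \ref{assumption2} the eigenvalue $\tau^{(i)}_j$ is simple, so standard perturbation theory for self-adjoint compact operators (see \cite{dauxois1982asymptotic} or \cite{Hall2006}) gives
\[ \hat\tau^{(i)}_{j,\lambda} - \tau^{(i)}_j = \bigl\langle \bigl(\hat c^{(i)}(\lambda, \cdot, \cdot) - c^{(i)}\bigr) v^{(i)}_j,\, v^{(i)}_j\bigr\rangle + O_P\bigl(\|\hat c^{(i)}(\lambda, \cdot, \cdot) - c^{(i)}\|^2\bigr), \]
uniformly in $\lambda \in [\varepsilon', 1]$; combined with (ii), this produces a scalar process FCLT $\sqrt N(\hat\tau^{(i)}_{j,\lambda} - \tau^{(i)}_j) \to_D a_i \mathbb{B}^{(i)}(\lambda)/\lambda$ for independent standard Brownian motions $\mathbb{B}^{(i)}$ and constants $a_i$ built from $\sigma^{(i)}_E$ and $\theta_i$. (iv) The delta method for the map $x \mapsto x^2$, non-degenerate thanks to $\tau^{(1)}_j \neq \tau^{(2)}_j$, merges the two independent pieces into the single process $\sigma_E \mathbb{B}(\lambda)/\lambda$, with $\sigma_E$ as in \eqref{h1}.

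The main technical obstacle is the uniformity of the expansion (iii) as $\lambda \downarrow 0$, where the shrinking subsample destabilises the eigenvalue estimates and the perturbation remainder blows up. I would handle this by first proving convergence on $[\varepsilon', 1]$ for arbitrary $\varepsilon' > 0$ and then verifying that the $\nu$-mass of $(0,\varepsilon')$ contributes negligibly to $\hat V_{j,N}$ as $\varepsilon' \downarrow 0$, uniformly in $N$. A Doob-type maximal inequality for the tensor-valued partial-sum processes (available from Assumption \ref{assumption1}) supplies $\sup_{\lambda \in (0,1]}\lambda \|\sqrt N(\hat c^{(i)}(\lambda, \cdot, \cdot) - c^{(i)})\| = O_P(1)$, which together with the $\lambda^4$ weight in \eqref{VNE} supplies the necessary tightness. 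Apart from this uniformity issue the argument is a routine combination of the \cite{Berkes2013} invariance principle, functional perturbation theory, and the delta method.
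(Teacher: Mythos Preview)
Your proposal is correct and follows the same strategy as the paper: replace $\hat\theta$ by $\theta_0$, linearise the eigenvalue, expand the square, prove a functional CLT, and apply the continuous mapping theorem. Two tactical differences merit comment. First, the paper avoids your truncation-at-$\varepsilon'$ device by working from the outset with the weighted process $H_N(\lambda) := \lambda^2 M_N(\lambda)$, whose limit $\lambda \sigma_E \mathbb{B}(\lambda)$ extends continuously to $\lambda = 0$; since both coordinates of your map $\Phi$ depend on $f$ only through $\lambda^2 f(\lambda)$, this reparametrisation costs nothing and eliminates the separate tightness-near-zero argument. Second, the paper reverses the order of your steps (ii)--(iii): it first linearises the eigenvalue (their Proposition~\ref{propositionev}, which supplies the uniform expansion you invoke in (iii)) to reduce to a \emph{scalar} partial-sum process in the variables $Y_n^{(i)} = \langle (X_n^{(i)}\otimes X_n^{(i)} - c^{(i)})v_j^{(i)}, v_j^{(i)}\rangle$, and then establishes the FCLT for that scalar process directly via $m$-dependent approximation and Berk's CLT for triangular arrays, rather than passing through an operator-valued invariance principle as an intermediate object. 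Finally, your step (i) is correct in outline, but the uniformity in $\lambda$ requires more than ``the affected number of summands is $o_P(\sqrt N)$ and each summand is uniformly integrable'': the paper controls $\sup_\lambda \lambda\|\tilde c^{(i)}(\lambda,\cdot,\cdot) - \hat c^{(i)}(\lambda,\cdot,\cdot)\|$ by invoking the Gaussian coupling of Theorem~\ref{thmapp1} together with the modulus of continuity of Brownian paths.
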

\noindent Combining the weak convergence in \eqref{VNlimit} with the continuous mapping Theorem yields that,
\begin{equation}
\frac{\hat E_{j,N}-E_{j}}{\hat{V}_{j,N}} \to_D W \label{statisticconvergence1}
\end{equation}
  where the random variable
\begin{equation}
W:=\frac{\mathbb{B}(1)}{\big\{\int_0^1 \lambda^2[\mathbb{B}(\lambda)-\lambda
\mathbb{B}(1)]^2 \nu(d \lambda)\big\}^{1/2}} \label{W}
\end{equation}
 is a pivot. Some quantiles of the distribution of $W$ can be found in Table \ref{table:1} (where $\nu$ is a discrete uniform distribution).

\begin{table}[h!]
\begin{center}
\begin{tabular}{|c |c c c|}
\hline
& $\alpha=0.01$ & $\alpha=0.05$ & $\alpha=0.1$ \\ [0.5ex]
\hline\hline
$K=20$& 16.479 & 9.895 & 7.097\\
\hline
$K=30$ & 16.248 & 9.925 & 7.149 \\
\hline
\end{tabular}\\[2ex]
\caption{\textit{$(1-\alpha)$-quantiles of the distribution of the statistic $W$ in
\eqref{W}, where $\nu$ is the uniform distribution on the set $\{l/K: l=1,...,K-1\}$.}}
\label{table:1}
\end{center}
\end{table}
 We can now construct an asymptotic level-$\alpha$-test rejecting the
null hypothesis in \eqref{H0H1eigenvalue}, whenever
\begin{align}
\frac{\hat E_{j,N} -\Delta_\tau}{\hat{V}_{j,N}}>q_{1-\alpha}, \label{test}
\end{align}
where $q_{1-\alpha}$ is the asymptotic $(1-\alpha)$-quantile of the distribution of
$W$. These considerations are summarized in the following theorem.

\begin{theorem} \label{theorem1}
Suppose that Assumptions \ref{assumption1} and \ref{assumption2} hold, that $\varepsilon < \min \{\theta_0, 1-\theta_0 \}$,  and that
$\sigma^{(i)}_E>0$ for $i=1,2$. Then for any $j \in \{ 1,\ldots,p \}$ the rejection rule \eqref{test} defines  an asymptotic level-$\alpha$ and consistent
test for the hypothesis \eqref{H0H1eigenvalue}, that is
\begin{equation}
\mathbb{P}\bigg( \frac{\hat E_{j,N}  -\Delta_\tau}{\hat{V}_{j,N}}>q_{1-\alpha} \bigg) \to \begin{cases}
0, &   E_j < \Delta_\tau\\
\alpha, &  E_j = \Delta_\tau\\
1, & E_j > \Delta_\tau .
\end{cases} \label{levelalpha}
\end{equation}
\end{theorem}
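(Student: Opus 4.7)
The plan is to analyse the test statistic through the decomposition
$$T_N \;:=\; \frac{\hat E_{j,N} - \Delta_\tau}{\hat V_{j,N}} \;=\; \frac{\hat E_{j,N} - E_j}{\hat V_{j,N}} \;+\; \frac{E_j - \Delta_\tau}{\hat V_{j,N}},$$
so that $T_N$ exceeds $q_{1-\alpha}$ if and only if the sum of a pivotal term and a deterministic signal term does. I first handle the generic regime $\tau_j^{(1)}\neq\tau_j^{(2)}$, where $\sigma_E>0$ and Proposition \ref{proposition2a} applies. The joint weak convergence in \eqref{VNlimit} combined with the continuous mapping theorem yields the pivotal limit $(\hat E_{j,N} - E_j)/\hat V_{j,N}\to_D W$ of \eqref{statisticconvergence1}; the continuous mapping step is legitimate because the limiting denominator $\sigma_E[\int_0^1\lambda^2(\mathbb{B}(\lambda)-\lambda\mathbb{B}(1))^2\nu(d\lambda)]^{1/2}$ is strictly positive almost surely for any probability measure $\nu$ supported in $(0,1)$. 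As a by-product, $\sqrt{N}\hat V_{j,N}\to_D\sigma_E Z$ with $Z>0$ a.s., so $\sqrt{N}\hat V_{j,N}$ is $O_P(1)$ and bounded away from $0$ in probability.

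With these ingredients the three cases in \eqref{levelalpha} follow by Slutsky-type arguments. On the boundary $E_j=\Delta_\tau$ the second summand in the decomposition vanishes identically, so $T_N\to_D W$ and, by definition of $q_{1-\alpha}$, the rejection probability converges to $\alpha$. When $E_j>\Delta_\tau$ (which forces $\tau_j^{(1)}\neq\tau_j^{(2)}$), I rewrite the second summand as $\sqrt{N}(E_j-\Delta_\tau)/(\sqrt{N}\hat V_{j,N})$: the numerator diverges deterministically to $+\infty$, while the denominator is $O_P(1)$ and bounded away from $0$ in probability, so this term tends to $+\infty$ in probability; combined with the $O_P(1)$ behaviour of the first summand, this yields $T_N\to+\infty$ in probability and rejection probability $\to 1$. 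The analogous argument with sign reversed handles $0<E_j<\Delta_\tau$, giving $T_N\to-\infty$ in probability and rejection probability $\to 0$.

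It remains to treat the degenerate case $\tau_j^{(1)}=\tau_j^{(2)}$, where $E_j=0<\Delta_\tau$ and Proposition \ref{proposition2a} does not apply. Standard consistency of the empirical eigenvalues (e.g.\ via the perturbation bounds in \cite{Hall2006} together with the $m$-approximability in Assumption \ref{assumption1}) gives $\hat\tau^{(i)}_{j,\lambda}\to\tau^{(i)}_j$ in probability uniformly in $\lambda$ bounded away from $0$, so both $\hat E_{j,N}=o_P(1)$ and $\hat V_{j,N}=o_P(1)$; since $\hat E_{j,N}-\Delta_\tau\to-\Delta_\tau<0$ in probability while $\hat V_{j,N}>0$ with probability tending to $1$, this again forces $T_N\to-\infty$ in probability and hence rejection probability $\to 0$. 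The main technical subtlety is the continuous-mapping step, since $(x,y)\mapsto x/y$ is only continuous off $\{y=0\}$; its justification reduces to verifying that the Brownian functional $\lambda\mapsto\mathbb{B}(\lambda)-\lambda\mathbb{B}(1)$ is almost surely non-constant on any subset of $(0,1)$ of positive $\nu$-measure, which is a standard property of the Brownian bridge.
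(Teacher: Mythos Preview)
Your argument is essentially the paper's own: decompose the test statistic into a pivotal piece plus a signal piece, invoke Proposition~\ref{proposition2a} in the generic case $\tau_j^{(1)}\neq\tau_j^{(2)}$ to obtain \eqref{statisticconvergence1} and $\hat V_{j,N}=o_P(1)$, and treat the boundary and interior/alternative via Slutsky.

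The one place where your treatment is thinner than the paper's is the degenerate case $E_j=0$. You appeal to ``standard consistency of the empirical eigenvalues'', but the estimators $\hat\tau^{(i)}_{j,\lambda}$ are computed from samples whose split point is the \emph{estimated} change point $\hat\theta$. Two subcases have to be separated. If $c^{(1)}\neq c^{(2)}$ (which is compatible with $\tau_j^{(1)}=\tau_j^{(2)}$), Proposition~\ref{Proposition2} gives $\hat\theta-\theta_0=o_P(N^{-1/2})$ and one can replace $\hat\theta$ by $\theta_0$ before invoking eigenvalue consistency; the paper then uses Proposition~\ref{propositionev} to obtain $\hat V_{j,N}=o_P(1)$. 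If $c^{(1)}=c^{(2)}$, however, $\hat\theta$ is \emph{not} consistent and merely sits somewhere in $[\varepsilon,1-\varepsilon]$; here one needs a bound on $\lambda\|\hat c^{(i)}(\lambda,\cdot,\cdot)-c^{(1)}\|$ that is uniform over all admissible split points, which the paper obtains from Lemma~B.1 in the supplement of \cite{ARS2018}. Your sketch does not distinguish these subcases and does not address why the random sample boundary is harmless in the second one. The conclusion you state is correct, but the justification needs this extra step; what you flag as the ``main technical subtlety'' (the continuous-mapping step in the generic case) is actually routine, whereas the real work in the degenerate case is controlling the random split.
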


%

It should be noted that by the same arguments as above a test can be constructed
for the hypothesis of 
a relevant difference in  the eigenvalues before and after the change point, that is
\begin{equation}
H_0: E_{j}=(\tau_j^{(1)}-\tau_j^{(2)})^2>\Delta_\tau  \quad vs.  \quad H_1:(\tau_j^{(1)}-\tau_j^{(2)})^2 \le \Delta_\tau. \label{H0H1eigenvector2}
\end{equation}

The corresponding test rejects if
\begin{equation}
\frac{\hat E_{j,N} -\Delta_\tau}{\hat{V}_{j,N}}<q_{\alpha} \label{equiv}
\end{equation}
and the same arguments as in the proof of Theorem \ref{theorem1} show that this decision rule defines a consistent and asymptotic level-$\alpha$-test, that is

$$\mathbb{P}\bigg( \frac{\hat E_{j,N} -\Delta_\tau}{\hat{V}_{j,N}}<q_{\alpha} \bigg) \to \begin{cases}
0, &  E_j   > \Delta_\tau\\
\alpha, & E_j=  \Delta_\tau\\
1, & E_j <  \Delta_\tau.
\end{cases} $$
The formulation of the hypothesis in the form \eqref{equiv} is useful if one wants to establish the similarity between the eigenvalues at a controlled type-I-error. Hypotheses of the form \eqref{H0H1eigenvector2}  are frequently investigated in biostatistics, in particular in bioequivalence studies (see, for example,  {\cite{wellek2010}}).

\subsection{Relevant changes in the eigenfunctions}

Similar techniques  as in the preceeding section can be employed in the analysis of
the hypothesis \eqref{H0H1eigenvector} of no relevant change in the $j$-th eigenfunction. This task is slightly more intricate,
as we are now dealing with $L^2[0,1]$-functions instead of real numbers.

Recall the definition of the estimated eigenfunctions in \eqref{h3}. As we have already noticed such functions are only determined up to a sign. Thus, to make comparisons   meaningful,
we always assume that the inner products $\big< v_j^{(1)},  v_j^{(2)}\big> $, $
\big< v_j^{(1)}, \hat{v}^{(1)}_{j, \lambda}\big> $, $ \big< v_j^{(2)}, \hat{v}^{(2)}_{j,
\lambda}\big>$ and $ \big< \hat{v}^{(1)}_{j, \lambda}, \hat{v}^{(2)}_{j, \lambda}
\big> $ are non-negative for all $\lambda \in [0,1]$. This assumption is solely made for the sake of a clear
presentation. It can be dropped if in the testing problem \eqref{H0H1eigenvector}
and in the subsequently presented test statistic  all occurring vector
distances $\|v-v'\|$ are replaced by the $\min(\|v-v'\|,\|v+v'\|)$.
This is also how the statistic should be understood in applications.

We  estimate the
squared difference
$D_{j}:=\|v_j^{(1)}-v_j^{(2)}\|^2$ by $\hat D_{j,N}:=\hat{D}_{j,N}(1)$, where the statistic $\hat D_{j,N}(\lambda)$ is defined by
$$
\hat D_{j,N}(\lambda)=\|\hat{v}_{j,\lambda}^{(1)}-\hat{v}_{j,\lambda}^{(2)}\|^2.
$$
Recall that for $\lambda \in [0,1]$  $\hat{v}_{j,\lambda}^{(1)}$ and  $\hat{v}_{j,\lambda}^{(2)}$  are
 defined  as the
eigenfunctions of the estimated covariance operators from the samples
$$X_1,...,X_{\lfloor N \hat \theta \lambda \rfloor} \quad \textnormal{ and} \quad  X_{ N
\hat \theta +1},...,X_{N \hat \theta +\lfloor (N- N \hat \theta)\lambda \rfloor},$$
respectively (see \eqref{eigenvectors}). We
also introduce the the normalizing factor 
\begin{equation}
\hat U_{j,N} :=\left( \int_0^1 \lambda^4 \Big\{ \hat D_{j,N}(\lambda)  -  \hat D_{j,N}(1)
\Big\}^2 \nu(d \lambda) \right)^{1/2}. \label{VN}
\end{equation}
We propose to
reject the null hypothesis of no relevant change  in the $j$-th eigenfunction in
\eqref{H0H1eigenvector}, whenever
\begin{align}
\frac{\hat  D_{j,N}-\Delta_v}{\hat U_{j,N}}>q_{1-\alpha}, \label{test2}
\end{align}
where $q_{1-\alpha}$ is the upper $\alpha$ quantile of the distribution of
$W$ defined in \eqref{W}.
The following result shows that this test has asymptotic level-$\alpha$ and
 is consistent. The proof can be found in Section \ref{proof_thm_2} in the appendix.

\begin{theorem} \label{theorem2}
 Suppose that Assumptions \ref{assumption1} and \ref{assumption2} hold, that $\varepsilon < \min \{\theta_0, 1-\theta_0 \}$, that $j \in \{1,\ldots,p  \}$
   and that the quantities
  $\sigma_D^{(1)}, \sigma_D^{(2)}$  defined in \eqref{longrunvariance1}    are positive. Then the test defined  in
\eqref{test2} has asymptotic level $\alpha$ and is consistent for the hypothesis
\eqref{H0H1eigenvector}, that is   
\begin{equation}
\mathbb{P}\Big( \frac{\hat D_{j,N}-\Delta_v}{\hat U_{j,N}}>q_{1-\alpha} \Big) \to \begin{cases}
0, &  D_{j}  < \Delta_v\\
\alpha, & D_{j}  = \Delta_v \\
1, & D_{j}  > \Delta_v.
\end{cases} \label{levelalpha2}
\end{equation}
\end{theorem}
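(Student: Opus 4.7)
The plan is to mimic the structure of the proof of Theorem \ref{theorem1}: first establish the joint weak convergence
\begin{equation*}
\sqrt{N}\bigl(\hat D_{j,N} - D_{j},\; \hat U_{j,N}\bigr) \to_D \Bigl(\sigma_D\,\mathbb{B}(1),\; \sigma_D\Bigl[\int_0^1 \lambda^2 \bigl(\mathbb{B}(\lambda) - \lambda\mathbb{B}(1)\bigr)^2\nu(d\lambda)\Bigr]^{1/2}\Bigr)
\end{equation*}
for a single Brownian motion $\mathbb{B}$ and a limiting variance $\sigma_D^2$ of the same shape as $\sigma_E^2$ in Proposition \ref{proposition2a} (a $\theta_0$- and $(1-\theta_0)$-weighted combination of $(\sigma_D^{(1)})^2$ and $(\sigma_D^{(2)})^2$). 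Once this is available, the three cases in \eqref{levelalpha2} follow exactly as in Theorem \ref{theorem1}: dividing through yields $(\hat D_{j,N}-D_{j})/\hat U_{j,N} \to_D W$, so on the boundary $D_j = \Delta_v$ the ratio converges to $W$ and gives the nominal level $\alpha$; for $D_j < \Delta_v$ (respectively $D_j > \Delta_v$) the numerator $\hat D_{j,N} - \Delta_v$ converges in probability to a negative (resp.\ positive) constant while $\hat U_{j,N}=O_P(1/\sqrt{N})$ is nonnegative, so the ratio diverges to $-\infty$ (resp.\ $+\infty$) and the rejection probability tends to $0$ (resp.\ $1$).

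The heart of the argument is therefore the joint convergence. First, Proposition \ref{Proposition2} allows me to replace $\hat\theta$ by $\theta_0$, because the contribution of the misclassified observations to $\hat c^{(i)}(\lambda,\cdot,\cdot)$ is $o_P(1/\sqrt{N})$ in Hilbert--Schmidt norm, uniformly in $\lambda \in [\varepsilon,1]$. A functional invariance principle in $L^2([0,1]^2)$ for the $m$-approximable outer products $X_n^{(i)}\otimes X_n^{(i)} - c^{(i)}$ (of the type in \cite{Berkes2013}) then yields
\begin{equation*}
\sqrt{N}\,\lambda\,\bigl(\hat c^{(i)}(\lambda,\cdot,\cdot) - c^{(i)}\bigr) \Rightarrow \Gamma^{(i)}(\lambda),
\end{equation*}
as processes in $\lambda$, where $\Gamma^{(i)}$ is a Gaussian process with independent $L^2([0,1]^2)$-valued increments. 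Under Assumption \ref{assumption2} the spectral gap around $\tau_j^{(i)}$ is positive and the standard Dauxois--Pousse--Romain perturbation expansion delivers
\begin{equation*}
\hat v_{j,\lambda}^{(i)} - v_j^{(i)} = \sum_{k\neq j}\frac{\langle (\hat c^{(i)}(\lambda,\cdot,\cdot) - c^{(i)})\,v_j^{(i)},\, v_k^{(i)}\rangle}{\tau_j^{(i)}-\tau_k^{(i)}}\, v_k^{(i)} + R_{N,\lambda}^{(i)},
\end{equation*}
with $\|R_{N,\lambda}^{(i)}\| = O_P(\|\hat c^{(i)}(\lambda,\cdot,\cdot) - c^{(i)}\|^2) = o_P(1/\sqrt{N})$ uniformly on $[\varepsilon,1]$.

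Expanding $\hat D_{j,N}(\lambda) = \|\hat v_{j,\lambda}^{(1)} - \hat v_{j,\lambda}^{(2)}\|^2$ and using the sign convention produces the linearisation
\begin{equation*}
\hat D_{j,N}(\lambda) - D_j = 2\bigl\langle v_j^{(1)} - v_j^{(2)},\, (\hat v_{j,\lambda}^{(1)} - v_j^{(1)}) - (\hat v_{j,\lambda}^{(2)} - v_j^{(2)})\bigr\rangle + o_P(1/\sqrt{N}),
\end{equation*}
which is a bounded linear functional of $\hat c^{(1)}(\lambda,\cdot,\cdot) - c^{(1)}$ and $\hat c^{(2)}(\lambda,\cdot,\cdot) - c^{(2)}$. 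Combining this with the invariance principle and the independence of the two samples, I obtain $\sqrt{N}(\hat D_{j,N}(\lambda) - D_j) \Rightarrow \sigma_D\,\mathbb{B}(\lambda)/\lambda$ as a process on $[\varepsilon,1]$; the factor $1/\lambda$ arises from the normalisation by $\lambda$ in the display above. Applying the continuous mapping theorem to this process with the functional $\lambda \mapsto (\,\cdot\,(1),\,[\int \lambda^4(\,\cdot\,(\lambda)-\,\cdot\,(1))^2\nu(d\lambda)]^{1/2})$ then gives the claimed joint convergence.

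The main technical obstacle will be controlling the eigenfunction perturbation uniformly in $\lambda \in [\varepsilon, 1]$: the infinite series over $k\neq j$ must be tamed despite potentially small spectral gaps $\tau_j^{(i)}-\tau_k^{(i)}$ for large $k$, and the quadratic remainder $R_{N,\lambda}^{(i)}$ must be shown to be $o_P(1/\sqrt{N})$ uniformly down to $\lambda = \varepsilon$, using the moment assumption \eqref{h2}. In addition, the sign ambiguity of $\hat v_{j,\lambda}^{(i)}$ requires a uniform argument ensuring that the convention $\langle v_j^{(i)}, \hat v_{j,\lambda}^{(i)}\rangle \ge 0$ can be enforced simultaneously for all $\lambda \in [\varepsilon,1]$ with probability tending to one. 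Once these are in place, the passage from the linearised statistic to the pivotal Brownian scaling is formally identical to the eigenvalue case treated in Theorem \ref{theorem1}.
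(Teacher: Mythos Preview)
Your overall strategy matches the paper's: establish process-level weak convergence yielding the joint limit of $\sqrt{N}(\hat D_{j,N}-D_j,\hat U_{j,N})$, then read off the three cases in \eqref{levelalpha2} exactly as for Theorem~\ref{theorem1}. The linearisation you sketch via the Dauxois--Pousse--Romain expansion is also what the paper does; it invokes Propositions~2.1 and~2.3 of \cite{auedettrice2019} for the uniform-in-$\lambda$ remainder bound $\sup_\lambda \lambda\|\tilde v_{j,\lambda}^{(i)}-v_j^{(i)}\|^2=o_P(1/\sqrt{N})$ and for the explicit partial-sum representation of the linear term.

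There is, however, a gap in your formulation. You aim for convergence of $\sqrt{N}(\hat D_{j,N}(\lambda)-D_j)$ only on $[\varepsilon,1]$, but $\hat U_{j,N}$ integrates against $\nu$ over all of $(0,1)$ and nothing forces $\nu((0,\varepsilon))=0$; moreover you are conflating the change-point cutoff $\varepsilon$ with an ad hoc lower bound on $\lambda$, which are unrelated parameters. The paper resolves this by working with the \emph{weighted} process
\[
H_N(\lambda)=\sqrt{N}\,\lambda^2\bigl(\hat D_{j,N}(\lambda)-D_j\bigr)
\]
and proving $\{H_N(\lambda)\}\to_D\{\lambda\sigma_D\mathbb{B}(\lambda)\}$ on the full interval $[0,1]$, after which
\[
\sqrt{N}\bigl(\hat D_{j,N}-D_j,\ \hat U_{j,N}\bigr)=\Bigl(H_N(1),\ \Bigl[\int_0^1\{H_N(\lambda)-\lambda^2 H_N(1)\}^2\,\nu(d\lambda)\Bigr]^{1/2}\Bigr)
\]
follows by continuous mapping without any restriction on the support of $\nu$. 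The $\lambda^2$ factor is exactly what tames the blow-up at $\lambda=0$: one power of $\lambda$ converts the kernel perturbation $\hat c^{(i)}(\lambda,\cdot,\cdot)-c^{(i)}$ into a partial sum normalised by $N$ (hence uniformly $O_P(N^{-1/2})$), and together with the bound $\sup_\lambda \lambda\|\tilde v_{j,\lambda}^{(i)}-v_j^{(i)}\|^2=o_P(N^{-1/2})$ this disposes of the quadratic remainder uniformly on $[0,1]$ rather than only away from the origin. Replacing your restricted process by $H_N$ closes the gap with no other change to your argument.
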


We conclude this section with a brief remark, that extends our results to non-centered data. This  is  of particular importance in
 applications such as presented in Section \ref{subsec33}.

\begin{remark} \label{remark}
{\rm A careful inspection of the proofs in Appendix \ref{appA} shows that all results in this section remain true  if the sequences of random variables $(X_n^{(i)})_{n \in \mathbb{Z}}$ have non-zero expectation  $\mu^{(i)} \in L^2[0,1]$ for $i=1,2$. In this case the estimators of the covariance kernels in \eqref{estimatedkernels1} and \eqref{estimatedkernels2}  have to be modified as follows
\begin{align*}
\hat{c}^{(1)}(  \lambda, s, t):= &\frac{1}{\lfloor N \hat \theta  \lambda \rfloor} \sum_{n=1}^{\lfloor  N \hat \theta \lambda \rfloor} \big[X_n(s)-\hat{\mu}^{(1)}( s) \big] \big[X_n(t)-\hat{\mu}^{(1)}( t) \big], \\
\hat{c}^{(2)}(  \lambda, s, t):=& \frac{1}{\lfloor (N- N \hat \theta)  \lambda \rfloor} \sum_{n= N\hat \theta  +1}^{N\hat{\theta}+\lfloor (N-N \hat \theta ) \lambda \rfloor} \big[X_n(s)-\hat{\mu}^{(2)}( s) \big] \big[X_n(t)-\hat{\mu}^{(2)}( t) \big],
\end{align*}
where $\hat \mu^{(1)}$ and $\hat \mu^{(2)}$ denote the empirical mean functions of the samples $X_1,...,X_{\lfloor N \hat \theta \rfloor}$ and $X_{\lfloor N \hat{\theta}\rfloor +1},...,X_N$ respectively.}
\end{remark}


\section{Finite sample properties}
 \label{sec3}

In this section we investigate the performance of the new tests  by means of a
small simulation study and illustrate potential applications in a data example. All simulations are based on $4000$ simulation runs. We are interested in a test of the hypothesis of no
relevant differences in the   eigenvalues and eigenfunctions as defined in \eqref{H0H1eigenvalue} and \eqref{H0H1eigenvector}, respectively.
In the subsequent results the measure $\nu$ in the statistics $\hat{V}_{j,N}$ and $\hat U_{j,N}$  is the uniform measure on the points $1/20,
2/20,...,19/20$ (see Table \ref{table:1}, $K=20$, for the critical values of the
distribution of $W$). Furthermore we assume that the change point is located at $\lfloor N/2 \rfloor$, that is $\theta_0=1/2$.

 \subsection{Relevant changes in the eigenvalues} \label{subsec32}

We investigate the rejection probabilities of the test \eqref{test2} for the hypothesis of no relevant change in the first and second eigenvalue.
To generate data we assume that the observed functions are smoothed over the
real Fourier basis of order $T$, which is defined for odd $T$ as
\begin{equation}\label{ort}
\{f_1,...,f_{T}\}=\Big\{1, \sqrt{2}\sin(2 \pi x),...,
\sqrt{2}\sin( \pi(T-1)x),\sqrt{2}\cos(2 \pi x),...,\sqrt{2}\cos( \pi(T-1)x)
\Big\}.
\end{equation}
Following \cite{AUE2009} we set  $T=21$, even though higher dimensions are
feasable.

 We define the covariance kernels in terms of the Fourier basis as follows
$$c^{(1)} (s,t):=\sum_{k=1}^T  \tau_k f_k(s)f_k(t) \quad \textnormal{and} \quad
c^{(2)}(s,t):=(1-\sqrt{E})\sum_{k=1}^4  \tau_k f_k(s)f_k(t)+\sum_{k=5}^T  \tau_k f_k(s)f_k(t), $$
where   $\tau_k:=1/k^2$ for $k=1,...,T$   and   the parameter   $E$  varies in the interval $[0,1]$.  Obviously the squared difference of the $j$-th eigenvalues of $c^{(1)} $ and $c^{(2)}$ is
$E_j=E/j^4$ for $j=1,..,4$ and subsequently $0$. 

As we have seen in Section \ref{sec2}, the square  $L^2$-distance between the kernels $c^{(1)}$ and $c^{(2)}$ is of importance    for the performance of the change point
 estimator \eqref{changepointestimator}. In the present case it is given by
$$\int_0^1 \int_0^1 \big[ c^{(1)}(s,t)-c^{(2)}(s,t) \big]^2 ds dt = E \sum_{k=1}^4 \tau_k^2   = 1.07875 \cdot E .$$

The simulated data is generated by randomly sampling sets of Fourier coefficients according to the above kernels.  First
we  generate $(N+1)$ i.i.d. random  vectors $\boldsymbol{\epsilon}_n:=(\epsilon_1,...,
\epsilon_T)\sim \mathcal{N}(0, \mbox{diag}(\tau_1,...,\tau_T))$, $n=0,...,N+1$. To
introduce potential dependence, we define a matrix $\Psi\in \mathbb{R}^{T \times
T}$ with i.i.d. normally distributed entries $\Psi_{l,k} \sim \mathcal{N}(0,\psi)$
and consider  the coefficients
\begin{align*}
\mathbf{a}_n= &\frac{\boldsymbol{\epsilon}_n+\Psi\boldsymbol{\epsilon}_{n-1}}
{\sqrt{1+\psi}}, \quad n=1,...,N.
\end{align*}
For $n=\lfloor N \theta_0 \rfloor +1,...,N$ we downscale the first four components of $\mathbf{a}_n:=(a_{n,1}, \ldots, a_{n,T})^T$ by a factor $\sqrt{1-\sqrt{E}}$. Finally the  process $\{ X_n\}_{n=1,\ldots,N}$ is defined by
\begin{equation} \label{h4}
X_n(s)=\sum_{k=1}^T a_{n,k}f_k(s).
\end{equation}
 An immediate
calculation reveals that  for $n=1,...,\lfloor N \theta_0 \rfloor$ the random variable $X_n$
has covariance kernel $c^{(1)} $ and for $n=\lfloor N \theta_0 \rfloor+1,...,N$
the covariance kernel of $X_n$ is given by $c^{(2)}$. The dependence of the data is determined by the choice
of $\psi$. For $\psi=0$ we generate i.i.d.  data and for $\psi>0$ an $fMA(1)$-
process. In the later case we choose $\psi$, such that  $\mathbb{E}\|\Psi\|_{L_1}
=1$.  In  each  simulation run we use  a new  realization of $\Psi$ to generate  the complete sample $X_{1}, \ldots , X_{N}$.

In Figure 1 and 2 we display the rejection probabilities of the test \eqref{test} for the hypothesis \eqref{H0H1eigenvalue} of no relevant change in the first and second eigenvalue, with level  $\alpha=5\%$. The threshold $\Delta_\tau$   is given by $0.1$ for $j=1$ and $0.005$ for $j=2$ and the tuning parameter in the estimator \eqref{changepointestimator} is chosen as  $\varepsilon = 0.05$.
\\[-5ex]

\begin{figure}[H]
   \centering
  \begin{subfigure}{0.5
  \textwidth}
\centering
 \includegraphics[width=1\linewidth]{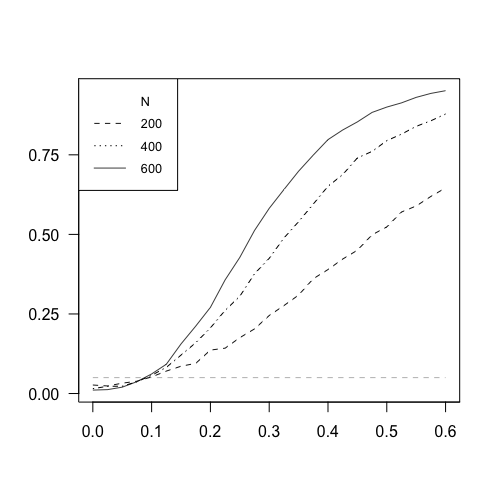}
 \label{fig:sub1}\\[-20ex]
 \end{subfigure}%
 \begin{subfigure}{0.5\textwidth}
 \centering
 \includegraphics[width=1\linewidth]{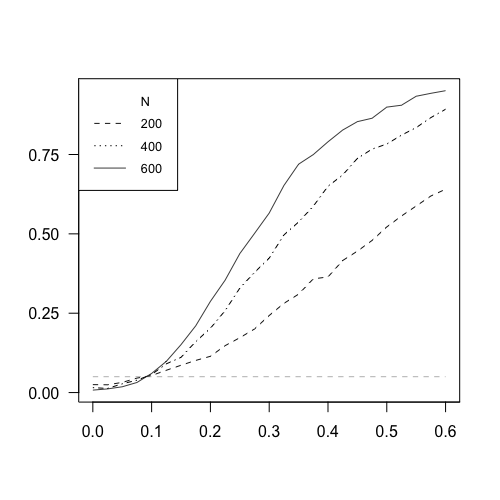}
\label{fig:sub2}
 \end{subfigure}\\[-8ex]
\caption{\textit{Rejection probabilities of the test \eqref{test} for the hypothesis \eqref{H0H1eigenvalue} of no relevant change in   the first eigenvalue, where $\Delta_\tau = 0.1$. Data comes from an   i.i.d. sequence (left) and an $fMA(1)$-process (right). The vertical dashed line visualizes the $5 \% $ level.  } \\[-8.5ex]}
 \label{fig4}
 \end{figure}
\begin{figure}[H]
 \begin{subfigure}{0.5\textwidth}
 \centering
\includegraphics[width=1\linewidth]{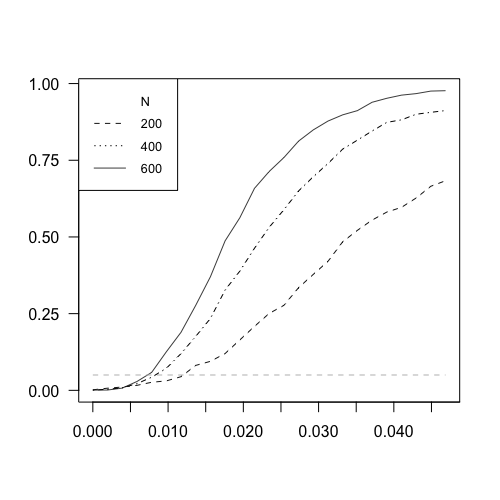}
   \label{fig:sub1}
 \end{subfigure}%
 \begin{subfigure}{0.5\textwidth}
\centering
\includegraphics[width=1\linewidth]{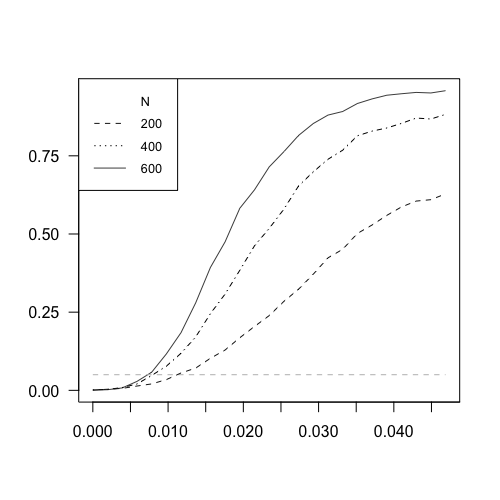}
\label{fig:sub2}
\end{subfigure}\\[-8ex]
\caption{\textit{Rejection probabilities of the test \eqref{test} for the hypothesis \eqref{H0H1eigenvalue} of no relevant change in   the second eigenvalue, where $\Delta_\tau = 0.005$. Data comes from an   i.i.d. sequence (left) and an $fMA(1)$-process (right). The vertical dashed line visualizes the $5 \% $ level.   } \\[-4ex]}
\label{fig5}
\end{figure}

 According to the theoretical discussion in Theorem \ref{theorem1} the  test should have rejection probabilities smaller, close and larger to $\alpha$
if  $ E_j <  \Delta_\tau $ (interior of the null hypothesis), $ E_{j}=\Delta_\tau$   (boundary of the null hypothesis) and $ E_j  > \Delta_\tau$ (alternative), respectively. 
For the first eigenvalue (Figure \ref{fig4}) we observe a good approximation of the nominal level, at 
the boundary of the null hypothesis
 even if the sample size is $N=200$ and a reasonable power. For the second eigenvalue (Figure \ref{fig5}) the test 
slightly conservative for $N=200$ at the boundary of the null hypothesis, but  the level is close to $\alpha $ for $N=400$ and $N=600$. A comparison   of the left and right part in
Figures \ref{fig4} and \ref{fig5}  shows  that   dependence in the data has only a small
impact on both type I and type II error, even though a subtle increase is visible.
Further simulations with different distributions of the Fourier coefficients
show that the results are stable in this respect, although  heavier tails
lead to a  loss of power. These results are not reported for the sake of brevity.

\subsection{Relevant changes in the eigenfunctions} \label{subsec31}
 To investigate the finite sample properties of the test \eqref{test2} for the hypothesis of no relevant change in the $j$-th eigenfunction in \eqref{ort} we define the covariance kernels
\begin{equation}
c^{(1)} (s,t):=\sum_{k=1}^T  \tau_k v_k^{(1)}(s)v_k^{(1)}(t) \quad \textnormal{and} \quad
c^{(2)}(s,t):=\sum_{k=1}^T  \tau_k v_k^{(2)}(s)v_k^{(2)}(t). \label{simkernels}
\end{equation}
Here the eigenvalues of $c^{(1)}$ and $c^{(2)}$ are the same, that is
$$\tau_k=1/k^2 \quad \textnormal{for}
\,\,k=1,...,T,$$
and the respective eigenfunctions $v_k^{(i)}$ are  defined by
$$v_k^{(1)}(s)=f_k(s) \quad (k=1,...,T), \quad v_k^{(2)}(s)=f_k(s) \quad (
k=3,...,T),$$ and
$$v_1^{(2)}(s)=\cos(\varphi)f_1(s)+\sin(\varphi)f_2(s), \quad v_2^{(2)}(s)=
\cos(\varphi)f_2(s)-\sin(\varphi)f_1(s),$$
where the Fourier basis $\{ f_1,\ldots, f_T \}$ is defined in \eqref{ort}.
 A simple calculation yields that the $L^2$-distance between the kernels is given by
$$\int_0^1 \int_0^1 \big[ c^{(1)}(s,t)-c^{(2)}(s,t) \big]^2 ds dt = \frac{5[1-\cos(\varphi)]}{2} $$
 and that the distance between the first and second eigenfunctions   is
 \begin{eqnarray*}
 \|v_1^{(1)}-v_1^{(2)}\| &=& \|v_2^{(1)}-v_2^{(2)}\| = \sqrt{2-2 \cos(\varphi)}.
 \end{eqnarray*}
By construction the two kernels $c^{(1)} $ and $c^{(2)}$ in this example are very similar,
and therefore the estimation of the change point  is a  challenging task. Any further difference in the eigensystems would increase the
$L^2$-distance of the kernels and thus facilitate the problem of change point detection.

The data  is generated  as described in Section \ref{subsec32}, where the coefficients $a_{n,k}$  in the representation
 \eqref{h4}  are given by  the vectors
\begin{align*}
\mathbf{a}_n= & (a_{n,1}, \ldots , a_{n,T})  = \frac{\boldsymbol{\epsilon}_n+\Psi\boldsymbol{\epsilon}_{n-1}}
{\sqrt{1+\psi}}, \quad n=1,...,\lfloor N \theta_0 \rfloor, \\
\mathbf{a}_n= & (a_{n,1}, \ldots , a_{n,T})  = R_{1,2}(\varphi) \frac{\boldsymbol{\epsilon}_n+\Psi
\boldsymbol{\epsilon}_{n-1}}{{\sqrt{1+\psi}}}, \quad n=\lfloor N \theta_0 \rfloor
+1,...,N.
\end{align*}
Here  $R_{1,2}(\varphi)$ is the rotation matrix of the first two components by an angle
$\varphi$. Note that for $n=1,...,\lfloor N \theta_0 \rfloor$ the random variable $X_n$ in \eqref{h4}
has covariance kernel $c^{(1)} $ and for $n=\lfloor N \theta_0 \rfloor+1,...,N$
the covariance kernel of $X_n$ is given by $c^{(2)}$.

In Figure 3 and 4 we display the rejection probabilities of the test \eqref{test2} for the hypothesis \eqref{H0H1eigenvector} of no relevant change in the first and second eigenfunction (threshold $\Delta_v= 0.1$), with level  $\alpha=5\%$. The tuning parameter in the change point estimator \eqref{changepointestimator}   is given by $\varepsilon = 0.05$.     \\[-6ex]

\begin{figure}[H]
 \centering
\begin{subfigure}{0.5\textwidth}
 \centering
\includegraphics[width=1\linewidth]{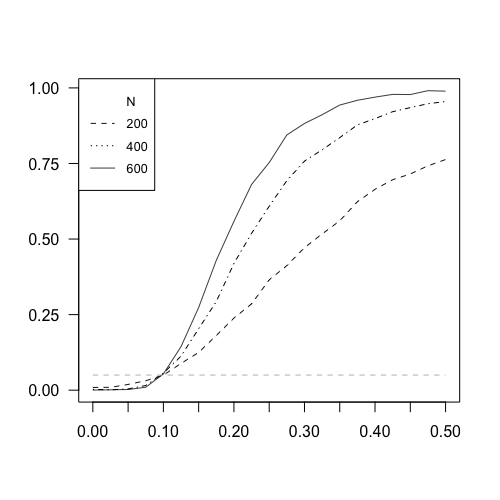}\\[-5ex]
\label{fig:sub11}
\end{subfigure}%
\begin{subfigure}{0.5\textwidth}
\centering
\includegraphics[width=1\linewidth]{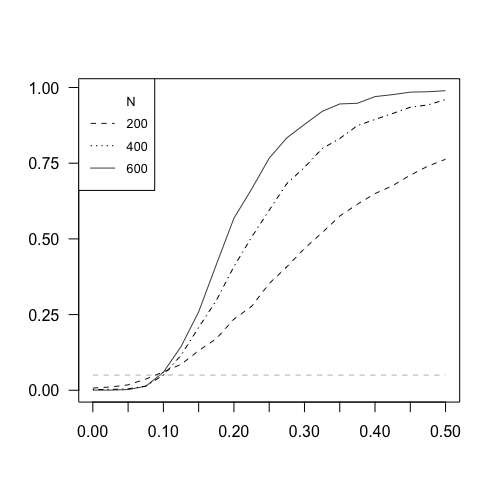}\\[-5ex]
 \label{fig:sub12}
\end{subfigure}
 \label{fig1}
\caption{\textit{Rejection probabilities of the test \eqref{test2} for the hypothesis \eqref{H0H1eigenvector} of no relevant change in the first eigenfunction. Data comes from an   i.i.d. sequence (left) and an $fMA(1)$-process (right).  The vertical dashed line visualizes the $5 \%$ level. }\\[-9ex]}
\end{figure}
\begin{figure}[H]
\begin{subfigure}{0.5\textwidth}
\centering
\includegraphics[width=1\linewidth]{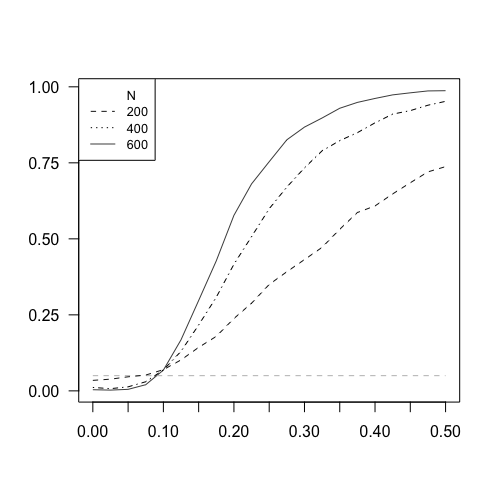}\\[-3ex]
\label{fig:sub21}
\end{subfigure}%
\begin{subfigure}{0.5\textwidth}
\centering
 \includegraphics[width=1\linewidth]{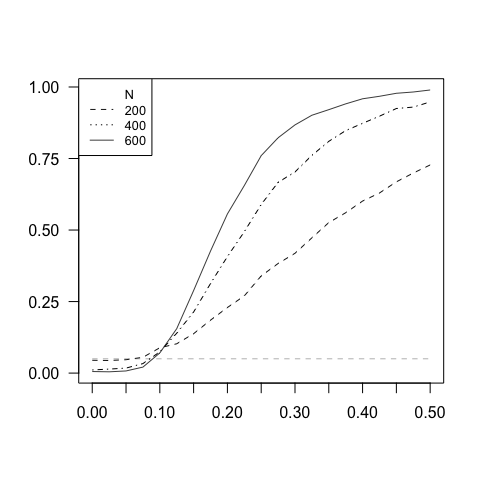}\\[-5ex]
\label{fig:sub22}
\end{subfigure}
\label{fig2}
\caption{\textit{Rejection probabilities of the test \eqref{test2} for the hypothesis \eqref{H0H1eigenvector} of no relevant change in  the second eigenfunction. Data comes from an   i.i.d. sequence (left) and an $fMA(1)$-process (right). The vertical dashed line visualizes the $5 \%$ level.  }\\[-1ex]}
\end{figure}

 We observe a good approximation of the nominal level  at boundary of the null hypothesis 
 ($D_{j} = \Delta_v$) and the test detects alternatives with decent power. Similar to the investigation of the eigenvalues, additional dependence has little impact on the results. An interesting difference occurs in the consideration of the second eigenfunction. Whereas for the second eigenvalue the self-normalized test \eqref{test} is slightly more conservative for small sample sizes, we observe that for the second eigenfunction the test \eqref{test2} becomes slightly more liberal in this case.

\subsection{Choice of the tuning parameter in  (\ref{changepointestimator})} \label{sec33}

It is of interest to investigate the impact of the choice of the boundary parameter $\varepsilon$ on the  performance of the tests. In practice the choice $\varepsilon=0.05$
indicates only  moderate knowledge, but one might want to use even smaller values, or
more rigorously put $\varepsilon=0$. We therefore consider the model of the preceding section, for the first eigenfunction with an i.i.d.  sample, of size 
$N=400$ and investigate the impact of   different   choices $\varepsilon=0, 0.005, 0.01, 0.05$ on the performance of the test \eqref{test2} for the hypothesis \eqref{H0H1eigenvector} of no relevant difference in the first eigenfunction. The results are displayed in Figure \ref{fig:sub21a} and indicate a high stability with respect to the choice of $\varepsilon$. Whereas the power of the test is hardly influenced by 
the choice of  $\varepsilon$, we observe that  the choice  $\varepsilon=0$ produces comparatively high rejection probabilities under the null hypothesis, particularly for small values of $D_{j}$. This effect can be explained as follows. Under the alternative the two samples $X_1,...,X_{\lfloor \theta_0 N\rfloor}$ and  $X_{\lfloor \theta_0 N\rfloor+1},...,X_N$  have  quite  distinct covariance structures and so the change point estimator will perform well, regardless of the choice of $\varepsilon$. 
However, if   $D_{j}$ is close to $0$, the samples will be be nearly indistinguishable, such that  the change point 
$\theta_0$ is estimated less  accurately. 
In such cases the test  has  larger  rejection probabilities. 

 This interpretation is visualized by  Figure \ref{fig:sub22b} where we display a histogram of the estimated change points using the estimator \eqref{changepointestimator} with $\varepsilon =0$    and true distance of the eigenfunctions $D_j=0$. We observe that in this case the estimator frequently localizes the change point in the first or last bin.
 For the  problem of testing for relevant differences in the eigenvalues as considered in Section \ref{subsec32},
similar effects can be observed, which are not reported to avoid redundancy.
  \\[-5ex]

 \begin{figure}[H]
\begin{subfigure}{0.5\textwidth}
\centering
\includegraphics[width=1\linewidth]{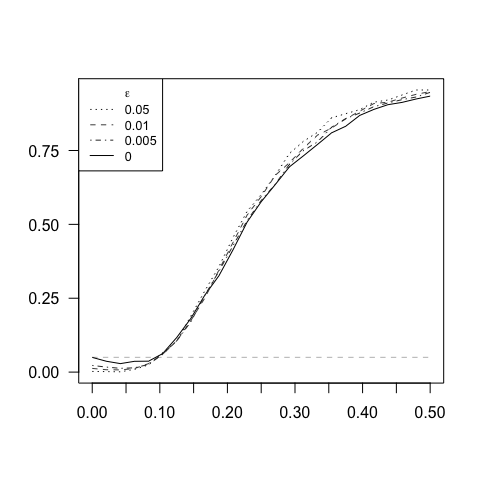}\\[-2ex]
\caption{ \textit{Simulated rejection probabilities of the \\ test \eqref{test2} for the hypothesis \eqref{H0H1eigenvector} of no \\ relevant change in the first eigenfunction  \\ ($\Delta_v = 0.1$). The  sample size is
$N=400$ and  \\  different choices of the tuning
parameter $\varepsilon$ \\ in the estimator \eqref{changepointestimator} are considered.}}
 \label{fig:sub21a}
 \end{subfigure}%
 \begin{subfigure}{0.5\textwidth}
 \centering
  \includegraphics[width=1\linewidth]{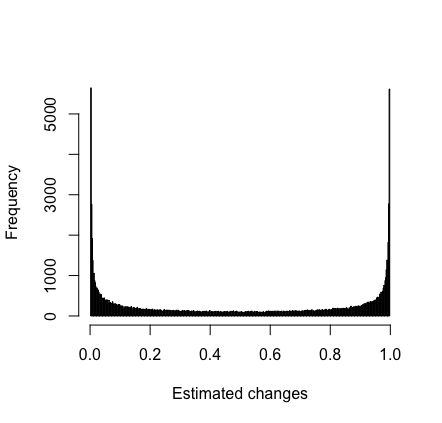}\\[-2ex]
\caption{\textit{Histogram of $100.000$ realizations of the change point estimator \eqref{changepointestimator} with tuning parameter $\varepsilon=0$. The sample size is $N=400$. \\[5ex]}}
 \label{fig:sub22b}
 \end{subfigure}
 \caption{$ $\\[-3ex] \label{fig3}}
 \end{figure}

 \subsection{A data example} \label{subsec33}

In this subsection we apply the  new methodology to identify relevant changes in   temperature measurements in northern Germany. The data consists of daily temperature averages, published by the national meteorological agency ``Der  Deutsche Wetterdienst'' ( \url{ https://www.dwd.de/DE/Home/home_node.html}) in the state of Bremen (Lat=$53.1^{\circ}$, Long$=008.7 ^{\circ}$) over the years $1890$ to $2018$. Due to incomplete data in the first years, as well as immediately after WWII, the years $1890-1893$ as well as $1945-1946$ have been removed from the data. This leaves us with $123$ years of daily measurements.

We smooth this data over the  Fourier basis defined in \eqref{ort}, for $T=41$, a choice between \cite{fremdtetal2014} (T=49) and \cite{AUE2009} (T=21), which allows a reasonable approximation of the daily temperatures and still reflects general trends. However it is worth noticing that our subsequent findings are quite robust with respect to different choices of $T$.

For the tuning in the change point estimator \eqref{changepointestimator} we choose  $\varepsilon=0.01$, which yields $N \hat \theta =92$ as an estimated change point. This corresponds to the year $1988$ and we can calculate the respective eigenfunctions and eigenvalues, before and after the change. Exemplarily we show in Figure \ref{fig6} the first three eigenfunctions before and after the estimated change point. We observe that the first eigenfunctions are quite similar, but larger differences can be found between the second and third eigenfunctions (see lower panel in Figure \ref{fig6}). \\[-5ex]

\begin{figure}[H]
\centering
\begin{subfigure}[t]{0.8\textwidth}
\centering
\includegraphics[width=\textwidth]{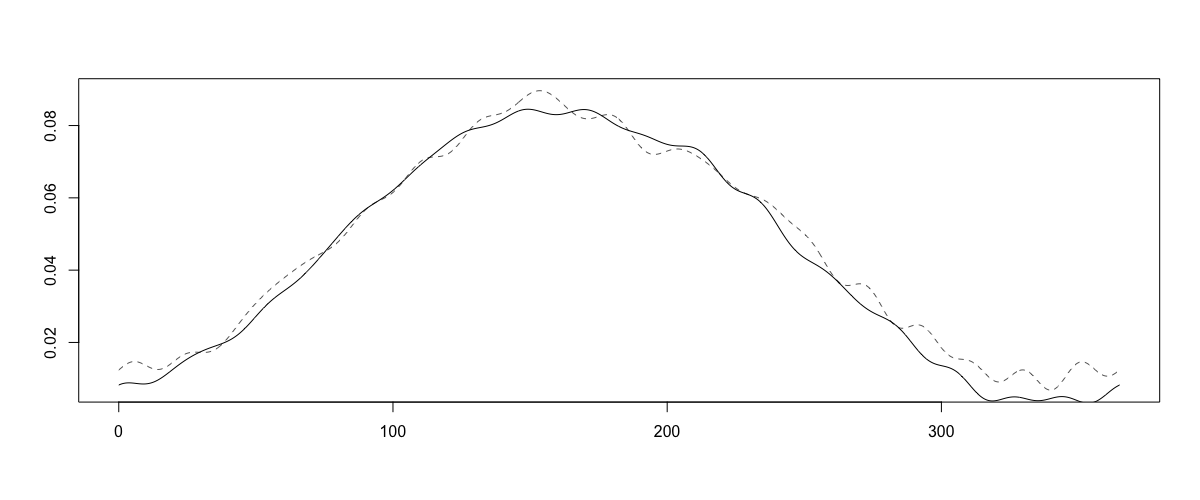}
\end{subfigure}

\begin{subfigure}[t]{0.8\textwidth}
\centering
\includegraphics[width=\textwidth]{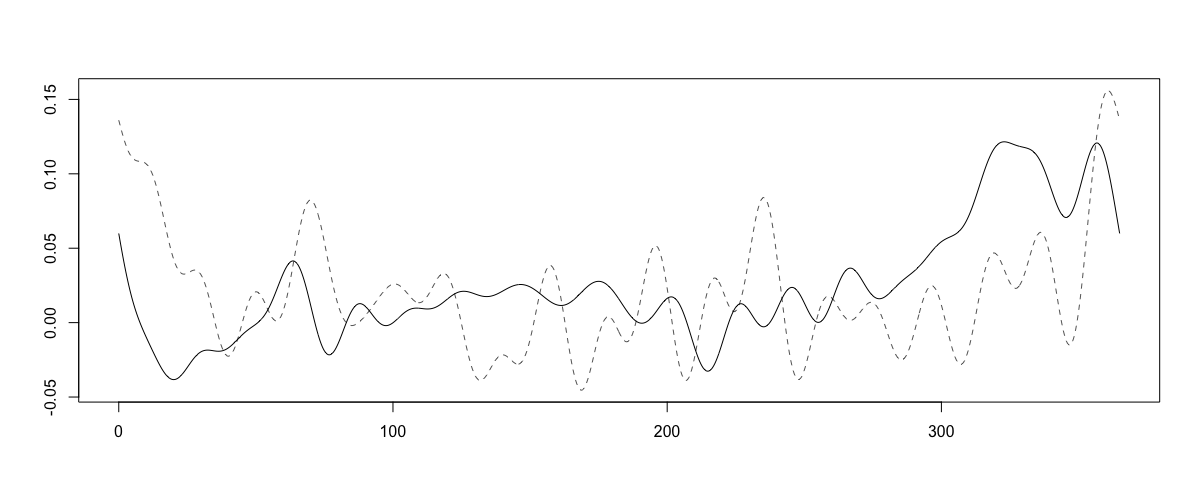}
\end{subfigure}

\begin{subfigure}[t]{0.8\textwidth}
\centering
\includegraphics[width=\textwidth]{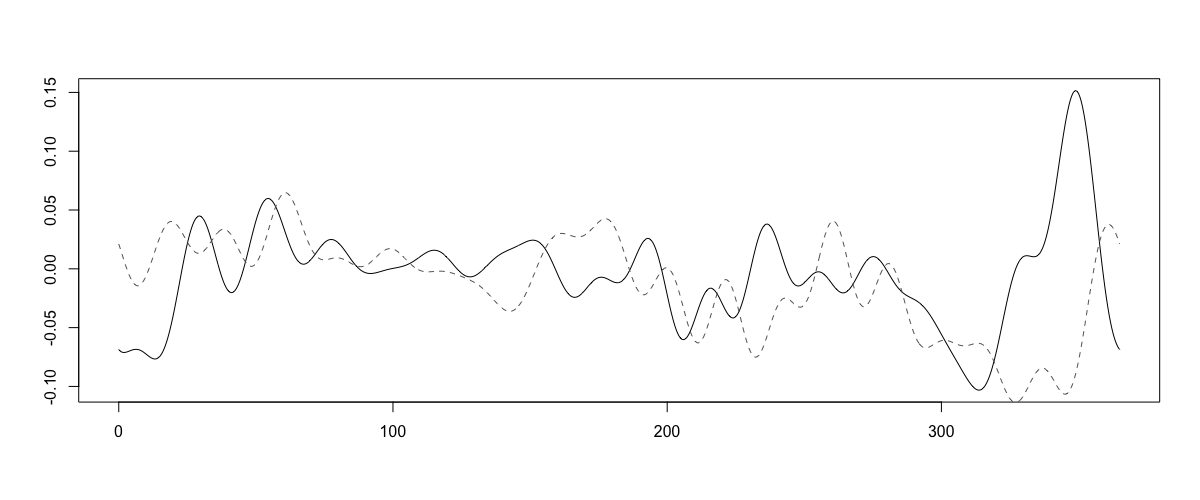}
 \end{subfigure}

 \caption{\textit{Eigenfunctions of the estimated covariance operators from 1894-1988 (solid) and 1989-2018 (dotted). Upper panel: First eigenfunctions $\hat v_{1}^{(1)}$ and $\hat v_{1}^{(2)}$. The curves differ only slightly. Is this relevant? Middle panel: Second eigenfunctions $\hat v_{2}^{(1)}$ and $\hat v_{2}^{(2)}$.  The functions are more dissimilar, reveiling very limited common trends. Lower panel: Third eigenfunctions $\hat v_{3}^{(1)}$ and $\hat v_{3}^{(2)}$. These functions are rather different, close to orthogonality.}\label{fig6}}

\end{figure}

For the first five eigenfunctions we have applied the test \eqref{test2} for the hypothesis \eqref{H0H1eigenvector} of no  relevant change to see  whether we can find relevant differences for varying sizes of $\Delta_v$. To make our results more interpretable we translate the measure of similarity $\Delta_v$ into an angle $\varphi$, i.e., if the squared distance of two eigenfunctions differs by at least $\Delta_v \approx 0.58$ the ``geometric angle'' between them is at least $\varphi =\pi/4$. This is again due to the fact that the angle $\varphi$ between two eigenfunctions $v, w$ determines their distance
$$\|v-w\|=\sqrt{2-2 \cos(\varphi)}.$$

In Table \ref{table:2} we  display the decisions of the test \eqref{test2} for the hypothesis \eqref{H0H1eigenvector} of no relevant changes in the eigenfunctions where different thresholds are considered.
We observe that the test does not detect relevant changes between  the first and the second eigenfunctions. In contrast, the eigenfunctions of larger order display significant differences, which
confirms the visualization in  Figure \ref{fig6}.
The test detects relevant changes in the third, fourth and fifth eigenfunctions for nearly all thresholds  (and the same holds true for eigenfunctions of larger order).

 \begin{table}[h!]
\begin{center}
\begin{tabular}{|c ||c |c |c |c |c|}
\hline
Eigenfunctions& $i=1$ & $i=2$ & $i=3$ & $i=4$ & $i=5$\\ [0.5ex]
\hline\hline
$\varphi=\pi/16$&TRUE  & TRUE & FALSE $^{>99\%}$ & FALSE$^{>99\%}$  & FALSE$^{>99\%}$ \\
\hline
$\varphi=\pi/8$& TRUE & TRUE & FALSE$^{>99\%}$  & FALSE$^{>95\%}$  & FALSE$^{>99\%}$ \\
\hline
$\varphi=\pi/4$ & TRUE & TRUE &  FALSE$^{>99\%}$ & FALSE$^{>95\%}$ & FALSE$^{>99\%}$ \\
\hline
$\varphi=2\pi/5$ & TRUE & TRUE &  FALSE$^{>95\%}$ & TRUE& FALSE$^{>90\%}$ \\
\hline
\end{tabular}
\\[2ex]
\caption{\textit{Relevance of the differences of the first five eigenfunctions for different, relevant angles $\varphi$. Acceptance of the null hypothesis in \eqref{H0H1eigenvector} with $\Delta = \sqrt{2 - 2 \cos (\varphi)}$ is denoted by ``TRUE'' ($p$-value $>10\%$), rejection by ``FALSE''. For rejections, superindices indicate the probability of a less extreme event under the null.}\\[-7ex]}
\label{table:2}
\end{center}
\end{table}

To fully appreciate these results we have to take the estimated eigenvalues into account. The first five estimates before and after the  change point are given by
\begin{align*}
& \hat \tau^{(1)}_1= 38.38,\quad  \hat \tau^{(1)}_2= 0.56, \quad  \hat \tau^{(1)}_3= 0.26, \quad  \hat \tau^{(1)}_4= 0.24, \quad  \hat \tau^{(1)}_5= 0.21\\
 & \hat \tau^{(2)}_1= 41.15,\quad   \hat \tau^{(2)}_2= 0.45,\quad  \hat \tau^{(2)}_3= 0.40, \quad  \hat \tau^{(2)}_4= 0.34, \quad  \hat \tau^{(2)}_5= 0.29.
\end{align*}

The rapid decay of the eigenvalues indicates, that most of the data's variance -in fact about $90 \%$ for each sample- is explained by the first principal components. Due to the similarity of the first eigenfunctions
a low-dimensional representation of all data may be given, by projecting on a common, low-dimensional function space, a process much facilitating subsequent analysis. In contrast, due to great dissimilarities of the higher order eigenfunctions, finding a common space that captures, say $95\%$ of all variance will require much higher dimensions.

Bejond such fPCA-related considerations, the eigenvalues encode further, valuable information about the data. They indicate how strongly each eigenfunction contributes to the measured functions. Displaying the  eigenvalues $2-12$ (those still of larger order than $0.01$) in the below graphic reveals a striking trend: The estimated eigenvalues of the time period from $1894-1988$ are decaying faster than those of $1989-2018$. 

It should be noted that this trend persists if we base our estimates on a change point earlier in time, even if we split the data in equally sized halfs. Of course contamination of the second data  set then leads to smaller differences, but the underlying trend of slower decay of the eigenvalues from the earlier time period remains visible. This indicates that the observed effects are not simply due to a suggestive change point selection.

\begin{figure}[H]
\centering
\includegraphics[width=0.9\textwidth]{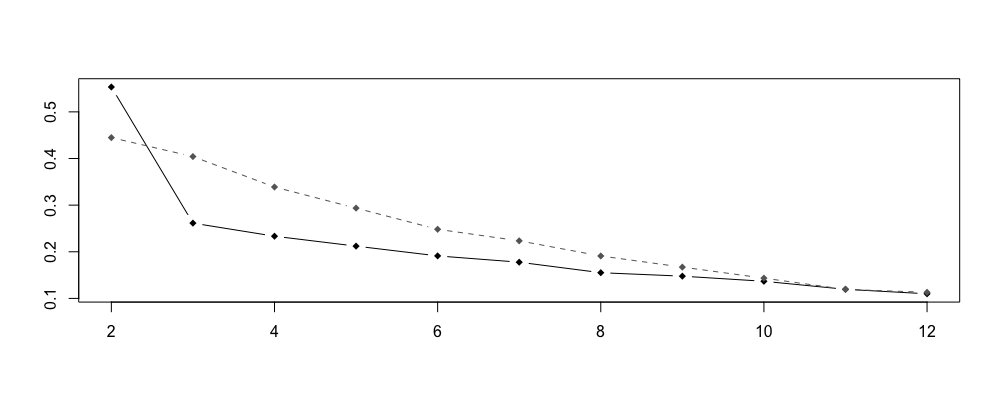}
\caption{\textit{Eigenvalues of order $2-12$ of the estimated covariance operators from 1884-1988 (solid) and 1989-2018 (dotted). }}
\end{figure}

To establish the relevance of these differences, we consider each eigenvalue with an individual threshold of relevance,
suited to its magnitude  (proportional to the size of $ \hat  \tau_j^{(1)}$) and apply the test \ref{test}.

\begin{table}[H] \centering
\begin{tabular}{|c|c|c|c|c|c|c|c|c|c|c|c|c|}
  \hline
  \diagbox[width=5em]{$\Delta_\tau$}{$j$}& 1$\,\, $ &2$\,\, $&3$\,\, $&4$\,\, $&5$\,\, $&6$\,\, $&7$\,\, $&8$\,\, $&9$\,\, $&10&11&12\\
  \hline
  $ \hat \tau_j^{(1)}/50$ & \cellcolor{black!75} & & \cellcolor{gray!25}& \cellcolor{black!75}& \cellcolor{black!75}& & & & & &  &\\
  \hline
    $ \hat \tau_j^{(1)}/100$ & \cellcolor{black!75} & & \cellcolor{gray!25}& \cellcolor{black!75}& \cellcolor{black!75}&\cellcolor{black!75} &\cellcolor{black!75} & & & &  &\\
  \hline
    $ \hat \tau_j^{(1)}/200$ & \cellcolor{black!75} & & \cellcolor{black!75}& \cellcolor{black!75}& \cellcolor{black!75}& \cellcolor{black!75}& \cellcolor{black!75}& \cellcolor{black!75}& \cellcolor{black!75}& &  &\\
  \hline
\end{tabular}
  \caption{\textit{Differences of the eigenvalues from the samples $1894-1988$ and $1989-2018$ for varying sizes of $\Delta_\tau$. Relevant differences with probability $\ge 95\%$ are marked in gray and $\ge 99\%$ in black. White cells suggest no rejection ($p$-value $>10\%$).}}
\end{table}

The visual inspection of the eigenvalues is consistent with the testing results. The eigenvalues of the covariance operators differ up to order $9$, with decreasing relevance.

One practical interpretation of these differences may be as follows: The faster decay of the eigenvalues of $\hat c^{(1)}$ compared to those of $\hat c^{(2)}$, indicates that the observations from $1894-1988$ are less influenced by higher order eigenfunctions, than those from $1989-2018$. Given that the eigenfunctions become more irregular with incresing order (compare Figure \ref{fig6}), this might imply rougher data, i.e. more short term variability temperatures recently, than in the first part of the $20$th century.

 \appendix


 \section{Proofs} \label{appA}
 \def\theequation{A.\arabic{equation}}
\setcounter{equation}{0}

 For clarity of presentation we confine ourselves to the
case $j=1$, i.e. differences in the first eigenfunction and eigenvalue.
The general case follows by exactly the same arguments.

An important feature of the proofs is the replacement of the estimated change point $\hat \theta$, by the deterministic, true change point $\theta_0$. If we knew the true change point, we could construct the ideal, estimated covariance kernels
by $\tilde c^{(1)} (1,s,t)$ and $\tilde c^{(2)} (1,s,t)$ where for $\lambda \in [0,1]$
\begin{eqnarray}
\tilde{c}^{(1)}(  \lambda, s, t)&=& \frac{1}{\lfloor \lfloor N  \theta_0  \rfloor \lambda \rfloor} \sum_{n=1}^{\lfloor \lfloor N  \theta_0  \rfloor \lambda \rfloor}X_n(s)X_n(t), \label{optimalkernels1} \\ \nonumber
\tilde{c}^{(2)}(  \lambda, s, t)&=& \frac{1}{\lfloor (N- \lfloor N  \theta_0  \rfloor)  \lambda \rfloor} \sum_{n= N \theta_0  +1}^{\lfloor N  \theta_0  \rfloor+\lfloor (N-\lfloor N  \theta_0  \rfloor ) \lambda \rfloor} X_n(s)X_n(t). \label{optimalkernels2}
\end{eqnarray}
These kernels, as well as the corresponding eigensystems 
$$
\tilde v_{1, \lambda}^{(i)}, \tilde v_{2, \lambda}^{(i)},... \quad \mbox{ and } \quad 
 \tilde \tau_{1, \lambda}^{(i)} \ge  \tilde \tau_{2, \lambda}^{(i)} \ge ...
  $$
  for $i=1,2$, will be frequently referred to in the following section.

\subsection{Proof of Proposition \ref{proposition2a}}

Recall the   definition of the eigenvalues $\hat{\tau}_{1, \lambda}^{(i)}$ for $i=1,2$ of the
estimated  kernels $\hat c^{(i)}(\lambda, \cdot, \cdot)$ in \eqref{estimatedkernels1} and \eqref{estimatedkernels2}. In order to show the proposition, we
  establish the weak convergence
\begin{equation}
\{H_N(\lambda)\}_{\lambda \in [0,1]}:=\Big\{ \sqrt{N}\lambda^2 [(\hat{\tau}^{(1)}_{1, \lambda}- \hat{\tau}^{(2)}_{1,\lambda})^2 - (\tau^{(1)}_1 -\tau_1^{(2)})^2]\Big\} _{\lambda \in [0,1]}\to_D \{\lambda \sigma_E \mathbb{B}(\lambda)\Big\}_{\lambda \in [0,1]}, \label{eigenvalueconv}
\end{equation}
where $\mathbb{B}$ is a standard Brownian motion. The statement then follows by an application of the continuous mapping Theorem, as
$$\sqrt{N}(\hat E_{1,N}-E_{1}, \hat{V}_{1,N}) = \Big(H_N(1), \Big[\int_0^1  \{H_N(\lambda) - \lambda^2 H_N(1)\}^2 d\nu(\lambda)
\Big]^{1/2}\Big).$$
The proof of \eqref{eigenvalueconv} consists of two steps. \\[1ex]
\textbf{Step 1:}
First we demonstrate that using the change point estimate $\hat{\theta}$ is
asymptotically as good as knowing the true location $\theta_0$ of the change point, or more precisely
\begin{align}
\label{replacethetaev}
\sqrt{N}\lambda^2 \big[(\tilde{\tau}^{(1)}_{1, \lambda}- \tilde{\tau}^{(2)}_{1,\lambda})^2 - (\hat{\tau}^{(1)}_{1, \lambda}- \hat{\tau}^{(2)}_{1,\lambda})^2 \big]=o_P(1)
\end{align}
uniformly with respect to $\lambda \in [0,1]$. To establish this equality we show
\begin{eqnarray}
\lambda |\tilde{\tau}^{(1)}_{1, \lambda}- \hat{\tau}^{(1)}_{1, \lambda}| =o_P(1/\sqrt{N}) \label{replacethetaev2}
\end{eqnarray}
uniformly in $\lambda \in [0,1]$.  Deducing \eqref{replacethetaev}
from \eqref{replacethetaev2} is then a simple calculation.  To obtain an upper bound on the left side of \eqref{replacethetaev2}, we employ Lemma 2.2 from \cite{HK2012}, which yields

\begin{align}
&\lambda |\tilde{\tau}^{(1)}_{1, \lambda}- \hat{\tau}^{(1)}_{1, \lambda}| \le  \lambda \|\tilde{c}^{(1)}(\lambda,
\cdot, \cdot )-\hat{c}^{(1)}(\lambda,  \cdot, \cdot )\|.
\label{upperboundeve}
\end{align}
For the difference of the kernels we obtain
\begin{align}
&\lambda\big[\tilde{c}^{(1)}(\lambda,  s, t )-\hat{c}^{(1)}(\lambda,  s, t
)\big] \label{termsreaarangede}\\
=&\frac{1}{ \lfloor N  \theta_0  \rfloor}\sum_{n=1}^{\lfloor \lfloor N
\theta_0\rfloor\lambda \rfloor} X_n(t)X_n(s) -\frac{1}{N\hat{\theta} } \sum_{n=1}
^{\lfloor N \hat{\theta}\lambda \rfloor} X_n(t)X_n(s) +\mathcal{O}
_P(N^{-1})\nonumber\\
=& \frac{sign(\theta_0-\hat{\theta})}{\lfloor N (\theta_0 \lor
\hat{\theta}) \rfloor} \sum_{n=\lfloor \lfloor(  \hat{\theta} \land   \theta_0
)N\rfloor \lambda\rfloor}^{\lfloor \lfloor (  \hat{\theta} \lor
\theta_0 )N \rfloor \lambda \rfloor } X_n(t)X_n(s)\nonumber+ \Big( \frac{1}{\lfloor N  \theta_0  \rfloor}-\frac{1}{N\hat{\theta}} \Big) \sum_{n=1}^{\lfloor \lfloor( \hat{\theta} \land
\theta_0)N\rfloor\lambda\rfloor } X_n(t)X_n(s)+\mathcal{O}_P(N^{-1}), \nonumber
\end{align}
where the second equality follows by a straightforward rearrangement of the terms.
Notice that $|1/\hat{\theta}-N/\lfloor N  \theta_0  \rfloor|=o_P(1/\sqrt{N})$, which follows immediately by Proposition \ref{Proposition2}.  An application of the triangle inequality
shows that the $L^2([0,1]\times[0,1])$-norm of the second term on right side of
\eqref{termsreaarangede}  can be
  bounded by
$$\Big|\frac{N}{\lfloor N  \theta_0  \rfloor}-\frac{1}{\hat{\theta}}\Big|  \frac{1}{N}\sum_{n=1}^{N}\|
X_n \otimes X_n  \| =o_P(1/\sqrt{N}),$$
where we have applied Birkhoff's Theorem.
The $L^2([0,1]\times[0,1])$-norm of the first term on the right of
\eqref{termsreaarangede} is also of order $o_P(1/\sqrt{N}).$ Counting the
summands we see that centering  only yields a further term of order $o_P(1/
\sqrt{N})$. We now use Theorem \ref{thmapp1} from Appendix \ref{appB} to get
\begin{align}
& \sup_\lambda \Big\{ \int_0^1\int_0^1\Big[ \frac{sign(\theta_0-\hat{\theta})}
{\lfloor N (\theta_0 \land   \hat{\theta}) \rfloor} \sum_{n=\lfloor \lfloor (  \hat{\theta} \land
\theta_0 )N\rfloor\lambda\rfloor}^{\lfloor\lfloor  (  \hat{\theta} \lor  \theta_0 )N\rfloor
\lambda \rfloor } \big[ X_n(t)X_n(s)-\mathbb{E}[X_n(t)X_n(s)] \big] \Big]^2 ds dt \Big\}^{1/2}
\label{Prop2eq2e} \\
=& \mathcal{O}_P\Big( \frac{1}{\sqrt{N}}\sup_\lambda \Big\{
\int_0^1\int_0^1\Big[\Gamma_N(\lambda\lfloor (  \hat{\theta} \lor  \theta_0 ) N\rfloor
/N,s,t )-\Gamma_N(\lambda\lfloor (  \hat{\theta} \land  \theta_0 ) N\rfloor /N,s,t)
\Big]^2 ds dt\Big\}^{1/2} \Big), \nonumber
\end{align}
where $\Gamma_N$ is a Gaussian process   defined by
$$ \Gamma_N(x,s,t):=\sum_{l\ge 1} \Lambda_{l} \mathbb{B}_{l,N}(x)\Psi_{l}
(s,t),$$
 $\{ \mathbb{B}_{l,N}: l \in \mathbb{N}\}$ are independent Brownian motions, $\{\Lambda_{l}:
l \in \mathbb{N}\}$ are non-negative, square summable values  and $\{\Psi_{l}: l \in \mathbb{N}\}$
is an orthonormal basis of $L^2([0,1]\times[0,1])
$. Enlarging the set over which we
maximize, we see that the supremum in \eqref{Prop2eq2e}  is   bounded by
\begin{align*}
\sup_{|\lambda-\mu|\le |\hat{\theta}-\theta_0|}\Big( \int_0^1\int_0^1\big[
\Gamma_N( \lambda,s,t) -\Gamma_N(\mu,s,t) \big]^2 ds dt\Big)^{1/2} ,
\end{align*}
and   the definition of $\Gamma_N$ yields
$$ \sup_{|\lambda-\mu|\le |\hat{\theta}-\theta_0|}\Big( \int_0^1\int_0^1\big[
\Gamma_N( \lambda,s,t) -\Gamma_N(\mu,s,t) \big]^2 ds dt\Big)^{1/2}=\sup_{|
\lambda-\mu|\le |\hat{\theta}-\theta_0|}\Big( \sum_{l\ge 1} \Lambda_{l}^2
\big[\mathbb{B}_{l,N}(\lambda)-\mathbb{B}_{l,N}(\mu)\big]^2\Big)^{1/2}.$$
 Its expectation is  bounded by
$$\Big( \mathbb{E} \Big[ \sup_{|\lambda-\mu|\le | \hat{\theta}-\theta_0|}(\mathbb{B}
(\lambda)-\mathbb{B}(\mu))^2 \Big ] \sum_{l\ge 1} \Lambda_{l}^2 \Big)^{1/2}  ,$$
where $\mathbb{B}$ is a standard Brownian motion. The expectation converges to
$0$ by application of the dominated convergence Theorem together with the almost sure continuity of the paths of the Brownian motion (see Billingsley, Section 37). These considerations yield \eqref{replacethetaev2} and hence \eqref{replacethetaev}. \\[1ex]

\textbf{Step 2:} We now prove the weak convergence \eqref{eigenvalueconv}. Step 1 and straightforward calculations yield 
\begin{eqnarray*}
 \sqrt{N}\lambda^2[ (\tilde{\tau}^{(1)}_{1,\lambda} - \tilde{\tau}^{(2)}_{1,
\lambda})^2- (\tau^{(1)}_1  +\tau_1^{(2)})^2]
 &=&  \sqrt{N}\lambda^2
[\tilde{\tau}^{(1)}_{1,\lambda} - \tilde{\tau}^{(2)}_{1,\lambda} -
\tau^{(1)}_1  -\tau_1^{(2)}]^2  \\
&& + 2\sqrt{N}\lambda^2 (\tau^{(1)}_1  -\tau_1^{(2)})(\tilde{\tau}^{(1)}_{1,\lambda} - \tilde{\tau}^{(2)}_{1,\lambda}-\tau^{(1)}_1  +\tau_1^{(2)} ) \\
  &=&2\sqrt{N}\lambda^2 (\tau^{(1)}_1  -\tau_1^{(2)})(\tilde{\tau}^{(1)}_{1,\lambda} - \tilde{\tau}^{(2)}_{1,\lambda}-\tau^{(1)}_1  +\tau_1^{(2)} ) +o_P(1).
\end{eqnarray*}

  For further analysis of the quantity $G_N(\lambda) = \sqrt{N}\lambda^2 (\tilde{\tau}^{(1)}_{1,
  \lambda} - \tilde{\tau}^{(2)}_{1,\lambda}-\tau^{(1)}_1  +\tau_1^{(2)}
  )$ we use Proposition
  \ref{propositionev}, in Appendix \ref{appB}, which gives
 \begin{align*}
&G_N(\lambda)=\sqrt{N}\lambda^2 (\tilde{\tau}^{(1)}_{1,\lambda} - \tilde{\tau}^{(2)}_{1,\lambda} - \tau^{(1)}_1  +\tau_1^{(2)})
=R_{N}^{(1)} (\lambda)  + R_{N}^{(2)}  (\lambda) +o_P(1) ~,
\end{align*}
where
 \begin{align*}
R_{N}^{(1)} (\lambda) =&\frac{\sqrt{N} \lambda}{\lfloor N \theta_0 \rfloor } \sum_{n=1}^{\lfloor \lfloor N \theta_0 \rfloor \lambda \rfloor } \int_0^1\int_0^1(X_n^{(1)}(s) X_n^{(1)}(t)-c^{(1)}(s,t)) v_1^{(1)}(s) v_1^{(1)}(t) ds dt \\
R_{N}^{(2)} (\lambda) =&\frac{\sqrt{N}\lambda}{\lfloor N (1-\theta_0) \rfloor } \sum_{n=\lfloor N \theta_0 \rfloor +1}^{\lfloor \lfloor N (1-\theta_0) \rfloor \lambda \rfloor +\lfloor N \theta_0 \rfloor } \int_0^1\int_0^1(X_n^{(2)}(s) X_n^{(2)}(t)-c^{(2)}(s,t)) v_1^{(2)}(s) v_1^{(2)}(t) ds dt.
\end{align*}
Weak convergence of the process $\{G_N(\lambda)\}_{\lambda \in [0,1]}$ now  follows by an application of the continuous mapping Theorem and the weak convergence of the vector valued process
 \begin{align} \label{tupleprocess}
&\big( R_{N}^{(1)} (\lambda) , R_{N}^{(2)} (\lambda)  \big  )_{\lambda \in [0,1]}.
\end{align}
For a proof of this statement we show
asymptotic tightness and convergence of the finite dimensional distributions. We  therefore introduce the random variables
\begin{equation}
Y_n^{(i)}:=\int_0^1\int_0^1 \big[ X_n^{(i)}(s) X_n^{(1)}(t)-c^{(i)}(s,t) \big] v_1^{(i)}(s) v_1^{(i)}(t) ds dt \quad i=1,2 \label{Y0}
\end{equation}
   and
\begin{equation}
Y_{n,m}^{(i)}:=\int_0^1\int_0^1\big[ X_{n,m}^{(i)}(s) X_{n,m}^{(i)}(t)-c^{(i)}(s,t) \big] v_1^{(i)}(s) v_1^{(i)}(t) ds dt \quad i=1,2,\label{Y0m}
\end{equation}
where the random functions $X^{(i)}_{n,m}$ are defined in \eqref{m-dependentRV}.
Asymptotic tightness can be shown coordinate-wise, such that we verify it
exemplarily for the first component. This can be rewritten as the process
\begin{equation}
\big(R^{(1)}_N(\lambda)\big)_{\lambda \in [0,1]}= \Big(\frac{\sqrt{N} \lambda}{\lfloor N \theta_0 \rfloor } \sum_{n=1}^{\lfloor \lfloor N
\theta_0 \rfloor \lambda \rfloor } Y_n^{(1)} \Big)_{\lambda \in [0,1]}.
\label{firstcomponent}
\end{equation}
As tightness of a stochastic process $
(G(\lambda))_{\lambda \in [0,1]}$  in $\ell^\infty[0,1]$ implies tightness of $(\lambda
G(\lambda))_{\lambda \in [0,1]}$, it will suffice to show tightness of
$(R^{(1)}_N (\lambda)/\lambda)_{\lambda \in [0,1]}$. To prove this assertion we note
\begin{eqnarray} \label{tightness}
&& \lim_{\delta \to 0} \lim_{N \to \infty} \mathbb{P}\Big( \sup_{|r-q|<\delta}
\frac{\sqrt{N}}{\lfloor N \theta_0 \rfloor }\Big|  \sum_{n=1}^{\lfloor \lfloor N \theta_0
\rfloor q \rfloor } Y_n^{(1)}-\sum_{n=1}^{\lfloor \lfloor N \theta_0 \rfloor r \rfloor }
Y_n^{(1)}\Big|>\varepsilon\Big)  \\
&& \leq \lim_{\delta \to 0} \lim_{N \to \infty} \mathbb{P}\Big( \sup_{|r-q|<\delta} \frac{\sqrt{N}}{\lfloor N \theta_0 \rfloor }\Big|  \sum_{n=1}^{\lfloor \lfloor N \theta_0 \rfloor q \rfloor } Y^{(1)}_{n,m}-\sum_{n=1}^{\lfloor \lfloor N \theta_0 \rfloor r \rfloor } Y^{(1)}_{n,m}\Big|>\varepsilon/2\Big) \nonumber\\
&& + 2\lim_{N\to \infty}  \mathbb{P}\Big( \sup_{q\in [0,1]} \frac{\sqrt{N}}{\lfloor N \theta_0 \rfloor }\Big|  \sum_{n=1}^{\lfloor \lfloor N \theta_0 \rfloor q \rfloor } Y^{(1)}_{n}- Y^{(1)}_{n,m}\Big|>\varepsilon/4\Big). \nonumber
\end{eqnarray}

Adapting
the proof of Lemma 2.1 in \cite{Berkes2013} shows that under the
assumptions stated in Section \ref{sec2}
\begin{align}
\label{berkeslemma21} \lim_{m \to \infty} \limsup_{N \to \infty} \mathbb{P}\Big(
\max_{k=1,...,N} \Big\| &\frac{1}{\sqrt{N}} \sum_{n=1}^k (X_n^{(i)} \otimes X_n^{(i)}
-  X_{n,m}^{(i)} \otimes X_{n,m} ^{(i)})\Big\|>x\Big)=0
\end{align}
for any $x>0$.
By this result and the Cauchy-Schwarz inequality, the second term on the right side of \eqref{tightness} converges to $0$ as $m \to \infty$.

 For the first term we define for $h=1,...,m$
$$ Q(h, T):=\{n \le T: n \,\textnormal{mod} \,m=h \}  $$
  and split the sample  $Y_{1,m}^{(1)},...,Y_{\lfloor N \theta_0\rfloor,m}^{(1)}$ in $m$ groups of independent identically distributed random variables, which gives
\begin{align*}
&\mathbb{P}\Big( \sup_{|r-q|<\delta} \frac{\sqrt{N}}{\lfloor N \theta_0 \rfloor }\Big|  \sum_{n=1}^{\lfloor \lfloor N \theta_0 \rfloor q \rfloor } Y^{(1)}_{n,m}-\sum_{n=1}^{\lfloor \lfloor N \theta_0 \rfloor r \rfloor } Y^{(1)}_{n,m}\Big|>\varepsilon/2\Big)\\
=& \mathbb{P}\Big( \sup_{|r-q|<\delta} \frac{\sqrt{N}}{\lfloor N \theta_0 \rfloor }\Big|  \sum_{h=1}^m \sum_{n \in Q(h, \lfloor \lfloor N \theta_0 \rfloor q \rfloor)} Y^{(1)}_{n,m}-\sum_{n \in  Q(h, {\lfloor \lfloor N \theta_0 \rfloor r \rfloor })} Y^{(1)}_{n,m}\Big|>\varepsilon/2\Big)\\
 \le & \mathbb{P}\Big( \sup_{|r-q|<\delta} m \max_{1 \le h \le m} \frac{\sqrt{N}}{\lfloor N \theta_0 \rfloor }\Big|  \sum_{n \in Q(h, \lfloor \lfloor N \theta_0 \rfloor q \rfloor)} Y^{(1)}_{n,m}-\sum_{n \in  Q(h, {\lfloor \lfloor N \theta_0 \rfloor r \rfloor })} Y^{(1)}_{n,m}\Big|>\varepsilon/2\Big)\\
\le & m\mathbb{P}\Big( \sup_{|r-q|<\delta} \frac{\sqrt{N}}{\lfloor N \theta_0 \rfloor }\Big|  \sum_{n \in Q(1, \lfloor \lfloor N \theta_0 \rfloor q \rfloor)} Y^{(1)}_{n,m}-\sum_{n \in  Q(1, {\lfloor \lfloor N \theta_0 \rfloor r \rfloor })} Y^{(1)}_{n,m}\Big|>\varepsilon/(2 m)\Big).
\end{align*}
In the last step we have used that all sums are identically distributed and we have assumed  without loss of generality
that  $\lfloor \lfloor N \theta_0\rfloor q\rfloor$ and $\lfloor \lfloor N \theta_0\rfloor r\rfloor$  are divisible by $m$ (otherwise the remaining  term,  is  asymptotically negligible). Taking the limit with respect to $N$ we observe that the right side converges to $0$, which follows due to asymptotic tightness of partial sums of independent random variables, as presented in \cite{VW1996}. \\

The remaining part of this proof consists in verifying the marginal convergence of \eqref{tupleprocess}. More precisely, we prove by an application of the Cramer-Wold device 
 for a finite number of parameters $0 \le \lambda_1\le ... \le \lambda_K \le 1$ weak convergence of the random vector 
$$
[R_N^{(1)}(\lambda_1), R_N^{(2)}(\lambda_1),...,R_N^{(1)}(\lambda_K), R_N^{(2)}(\lambda_K)].
$$
Again   \eqref{berkeslemma21} and the Cauchy-Schwarz inequality show that we can replace this vector by its $m$-dependent version $[R_{N,m}^{(1)}(\lambda_1), R_{N,m}^{(2)}(\lambda_1),...,R_{N,m}^{(1)}(\lambda_K), R_{N,m}^{(2)}(\lambda_K)]$, where
$$ \big(R^{(1)}_{N,m}(\lambda),  R^{(2)}_{N,m}(\lambda)\big)=\Big(\frac{\sqrt{N} \lambda}{\lfloor N \theta_0 \rfloor } \sum_{n=1}^{\lfloor \lfloor N
\theta_0 \rfloor \lambda \rfloor } Y_{n,m}^{(1)} , \frac{\sqrt{N}\lambda}{\lfloor N (1-\theta_0) \rfloor } \sum_{n=\lfloor N \theta_0 \rfloor +1}^{\lfloor \lfloor N (1-\theta_0) \rfloor \lambda \rfloor +\lfloor N \theta_0 \rfloor } Y_{n,m}^{(2)}\Big)_{\lambda \in [0,1]}.$$
  Now let $a_1,...,a_K$ and $b_1,...,b_K$ be arbitrary real numbers and consider the sum

 \begin{align}
\sum^K_{j=1} a_j R^{(1)}_{N,m} (\lambda_j) + b_j R^{(2)}_{N,m} (\lambda_j) = &\sqrt{N}\sum_{j=1}^K \Big\{ \sum_{n=\lfloor \lambda_{j-1}\lfloor N
\theta_0\rfloor \rfloor}^{\lfloor \lambda_j\lfloor N \theta_0\rfloor \rfloor}Y_{n,m}^{(1)}
\frac{\lambda_j}{\lfloor N \theta_0 \rfloor} \sum_{l=1}^j a_l \Big\} \label{clttermss}\\
+&\sqrt{N}\sum_{j=1}^K \Big\{ \sum_{n=\lfloor \lambda_{j-1}N\ \rfloor +
\lfloor N \theta_0\rfloor}^{\lfloor \lambda_{j}N\ \rfloor +\lfloor N \theta_0\rfloor}
Y_{n,m}^{(2)} \frac{\lambda_j}{\lfloor N (1-\theta_0) \rfloor}  \sum_{l=1}^j b_l\Big\}
\nonumber
\end{align}
(here we put $\lambda_0 = 0)$. To establish weak convergence we use the central limit theorem from \cite{Be1973} for $m
$-dependent triangular arrays of random variables. The only non-trivial assumption of this theorem in our case is the convergence of the variance of \eqref{clttermss}, which will be established next. As we can see the covariance of the two groups for $i=1, 2$ converges to $0$, since
$$\frac{1}{N}\mathbb{C}ov\bigg( \sum_{j=1}^K  \sum_{n=
\lambda_{j-1}\lfloor N \theta_0\rfloor}^{\lfloor \lambda_j
\lfloor N \theta_0\rfloor \rfloor}Y_{n,m}^{(1)} \frac{N \lambda_j}{\lfloor N
\theta_0 \rfloor}  \sum_{l=1}^j a_l,  \sum_{j=1}^K  \sum_{n=\lfloor \lambda_{j-1}N\
\rfloor +\lfloor N \theta_0\rfloor}^{\lfloor \lambda_{j}N\ \rfloor +\lfloor N
\theta_0\rfloor}Y_{n,m}^{(2)} \frac{N \lambda_j}{\lfloor N (1-\theta_0) \rfloor}
\sum_{l=1}^j b_l\bigg)  $$
 is of order $o(1)$ due to $m$-dependence. Consequently, we can investigate the variance of each of the two terms in
\eqref{clttermss} separately. Iterating the argument yields that we may confine ourselves to the variance of  the terms
$$ \sqrt{N}\sum_{n=\lambda_{j-1}\lfloor N \theta_0\rfloor}^{\lfloor \lambda_j\lfloor
N \theta_0\rfloor \rfloor}Y_{n,m}^{(1)} \frac{\lambda_j}{\lfloor N \theta_0
\rfloor}  \sum_{l=1}^j a_l \quad \textnormal{and} \quad \sqrt{N} \sum_{n=\lfloor
\lambda_{j'-1}N\ \rfloor +\lfloor N \theta_0\rfloor}^{\lfloor \lambda_{j'}N\ \rfloor +
\lfloor N \theta_0\rfloor}Y_{n,m}^{(2)} \frac{\lambda_{j'}}{\lfloor N (1-\theta_0)
\rfloor}  \sum_{l=1}^{j'} b_l,$$
 for $j, j'=1,...,K$ separately.
 The subsequent convergence arguments are now the same for all remaining terms and we
 only  exemplarily  consider the first one and obtain 
\begin{align}
&\mathbb{V}ar \Big\{\sqrt{N}\sum_{n=1}^{\lfloor \lambda_1\lfloor N \theta_0\rfloor
\rfloor}Y_{n,m}^{(1)} \frac{\lambda_1}{\lfloor N \theta_0 \rfloor} a_1  \Big\}=
\frac{\lambda_1^2 N a_1^2}{\lfloor N \theta_0 \rfloor^2} \mathbb{V}ar \Big\{
\sum_{n=1}^{\lfloor \lambda_1\lfloor N \theta_0\rfloor \rfloor}Y_{n,m}^{(1)} \Big
\} \label{longrunconvergence}  \\
=& \frac{\lambda_1^2\lfloor \lambda_1\lfloor N \theta_0\rfloor \rfloor N a_1^2}{\lfloor
N \theta_0 \rfloor^2} \sum_{|k| \le m}(1-|k|/\lfloor \lambda_1\lfloor N \theta_0\rfloor \rfloor)\mathbb{C}ov(Y_{0,m}^{(1)},
Y^{(1)}_{k,m}) \to  \lambda_1^3 \frac{a_1^2}{\theta_0} \sum_{|k| \le m} \mathbb{C}ov(Y_{0,m}^{(1)},
Y^{(1)}_{k,m}).  \nonumber
\end{align}
Finally we have to show that for $m \to \infty$ the  sum on the right-hand side converges to
\begin{align}
(\sigma^{(1)}_E)^2= \mathbb{V}ar(Y_0^{(1)})+2\sum_{l \ge 1} \mathbb{C}ov
(Y_{0}^{(1)},Y_{l}^{(1)})>0,\label{longrunvariance1e}
\end{align}
which follows from
\begin{equation}\label{CN}
C_m:=\sum_{|k|\le m} \big[\mathbb{C}ov(Y_0^{(1)}, Y_k^{(1)})-\mathbb{C}ov(Y_{0,m}^{(1)},Y^{(1)}_{k,m})\big] \to 0.
\end{equation}
Note that $\sigma^{(1)}_E$ is positive by assumption.
To prove \eqref{CN} consider the estimate
\begin{align*}
C_m
\le & \sum_{|k|\le m} \big|\mathbb{C}ov(Y_0^{(1)}, Y_k^{(1)}-Y^{(1)}_{k,m})\big|+\sum_{|k|\le m}\big|\mathbb{C}ov(Y_{0,m}^{(1)}-Y_0^{(1)},Y^{(1)}_{k,m})\big|.
\end{align*}
Each term can be   bounded according to Cauchy-Schwarz by
\begin{equation}
\sum_{|k|\le m} \mathbb{E}[(Y_{0,m}^{(1)}-Y_0^{(1)})^2]^{1/2}
\mathbb{E}[(Y^{(1)}_0)^2]^{1/2} =  m \mathbb{E}[(Y^{(1)}_0)^2]^{1/2}
\mathbb{E}[(Y_{0,m}^{(1)}-Y_0^{(1)})^2]^{1/2}.
\label{summabilargument}
\end{equation}
The expectation on the right can be
further analyzed plugging in the definitions of $Y_0$ and $Y_{0,m}$ (see \eqref{Y0}, \eqref{Y0m}  ).
\begin{align*}
\mathbb{E}\big[(Y_{0,m}^{(1)}-Y_0^{(1)})^2\big]^{1/2}=&
\mathbb{E} \Big[ \Big\{\int_0^1 \int_0^1 [X_{0,m}^{(1)}(s)X_{0,m}^{(1)}(t)-X_{0}^{(1)}(s)X_{0}^{(1)}(t)]v_1^{(1)}(s) v_1^{(1)}(t)ds dt \Big\}^2  \Big]^{1/2} \\
\le &  \mathbb{E}\big[ \|X_0\otimes X_0-X_{0,m}\otimes
X_{0,m}\|^2 \big]^{1/2}\\
\le & 2 \| \mathbb{E}\big[ \|X_0-X_{0,m}\|^2 \|X_0\|^2 + \|X_0-X_{0,m}\|^2 \|X_{0,m}\|^2 \big]^{1/2}\\
\le & 4 \mathbb{E}\big[ \|X_0-X_{0,m}\|^4\big]^{1/4}
\mathbb{E}\big[ \|X_0\|^4\big]^{1/4}.
\end{align*}
Using this estimate in \eqref{summabilargument} yields the upper bound
\begin{equation}
C_m \le  L (2m+1) \mathbb{E}\big[ \|X_0-X_{0,m}\|^4\big]^{1/4},
\label{CNtozero}
\end{equation}
for some fixed constant $L>0$. According to Assumption \ref{assumption1} there exists a sufficiently small number $\eta>0$, such that the sequence $\mathbb{E}\big[ \|X_0-X_{0,m}\|^4\big]^{1/4+\eta}$ is summable, and we obtain
$$
\lim_{m \to \infty}  (2m+1) \mathbb{E}\big[ \|X_0-X_{0,m}\|^4\big]^{1/4} = 0.
$$
Analogously to $\sigma^{(1)}_E$ we define the long run variance for the random variables $Y_n^{(2)}$ as
  \begin{equation}
(\sigma^{(2)}_E)^2= \mathbb{V}ar(Y_0^{(2)})+2\sum_{l \ge 1} \mathbb{C}ov(Y_{0}^{(2)},Y_{l}^{(2)})>0 \label{longrunvariance2e}
\end{equation}
(recall that   $\sigma^{(2)}_E$ is positive by assumption). We have now  verified all conditions of the Theorem in \cite{Be1973} and this implies convergence of the finite dimensional distributions. Consequently it follows that
\begin{align*}
&\Big( \frac{\sqrt{N} \lambda}{\lfloor N \theta_0 \rfloor} \sum_{n=1}^{\lfloor \lfloor N \theta_0 \rfloor \lambda \rfloor}
Y_n^{(1)} , \frac{\sqrt{N} \lambda}{\lfloor N (1-\theta_0) \rfloor} \sum_{n= \lfloor N \theta _0\rfloor +1}^{ \lfloor N \theta_0 \rfloor +\lfloor \lfloor N (1-\theta_0) \rfloor \lambda \rfloor} Y_n^{(2)} \Big)_{ \lambda \in [0,1]} \\\to_D& \left(    \sigma_E^{(1)}\lambda\mathbb{B}_1(\lambda)/\sqrt{\theta_0}, \sigma_E^{(2)} \lambda\mathbb{B}_2(\lambda)/\sqrt{1-\theta_0} \right)_{ \lambda \in [0,1]},
\end{align*}
where $\mathbb{B}_1$ and $\mathbb{B}_2$ are independent, standard Brownian motions.
Combining these considerations with the continuous mapping theorem shows  that
\begin{align*}
&\Big\{ \sqrt{N}\lambda^2 [(\tilde{\tau}^{(1)}_{1, \lambda}- \tilde{\tau}^{(2)}_{1,\lambda})^2 - (\tau^{(1)}_1-\tau_1^{(2)})^2]\Big\} _{\lambda \in [0,1]} \\
 \to_D & \left( 2|\tau_1^{(1)}-\tau_1^{(2)}|\lambda [\sigma_E^{(1)}/\sqrt{\theta_0} \mathbb{B}_1(\lambda)+\sigma_E^{(2)}/\sqrt{1-\theta_0}\mathbb{B}_2(\lambda)]\right)_{\lambda \in [0,1]},
\end{align*}
which completes the proof. \hfill $\Box$

\subsection{Proof of Theorem \ref{theorem1}}

Recall that the probability of rejecting the null hypothesis in \eqref{H0H1eigenvector} is given by
$$\mathbb{P}\Big( \frac{\hat E_{1,N} -\Delta_\tau}{\hat{V}_{1,N}}>q_{1-\alpha}\Big)= \mathbb{P}\Big( \frac{\hat E_{1,N} -E_{1}}{\hat{V}_{1,N}}>q_{1-\alpha}+\frac{\Delta_\tau -E_1}{\hat{V}_{1,N}}\Big) .$$ Suppose that  $E_1>0$ i.e. that the  first eigenvalues of $c^{(1)}$ and $c^{(2)}$ are different. Then by  Proposition \ref{Proposition2},
\begin{equation}\label{H2e}
\frac{\hat E_{1,N}-E_{1}}{\hat{V}_{1,N}} \to_D W,
\end{equation}
where the random variable $W$ is defined in \eqref{W}. The same result shows that
$\hat{V}_{1,N}$ is of order $o_P(1)$ which implies that $(\Delta_\tau -E_1)/\hat{V}_{1,N}$ converges to $+\infty$ if $\Delta_\tau >E_1$ and to $-\infty$ if $\Delta_\tau <E_1$, both in probability. This implies consistency and level $\alpha$ as stated in \eqref{levelalpha}, in the case $E_1 > 0$.

Now suppose that $E_{1}=0$. In this case we show that $\hat E_{1,N}$ and
$\hat{V}_{1,N}$ are of order $o_P(1)$. Then, with probability converging to $1$, $\hat E_{1,N}/\hat{V}_{1,N}$
is asymptotically negligible compared to $\Delta_\tau/\hat{V}_{1,N}$ which converges to infinity.  First,
suppose that the kernels $c^{(1)}$ and $c^{(2)}$ are not equal.  Proposition
\ref{Proposition2} implies that we may replace the change point estimator in
numerator $\hat E_{1,N}$ and denominator $\hat{V}_{1,N}$ by the actual change point, only
incurring asymptotically vanishing terms.  More precisely, the denominator in \eqref{H2e} equals
\begin{eqnarray*}
\hat{V}_{1,N} &=& \Big( \int_0^1 \lambda^4 \Big[ (\tilde{\tau}_{1, \lambda}^{(1)}
-\tilde{\tau}_{1, \lambda}^{(2)})^2 - (\tilde{\tau}_{1, 1}^{(1)}-\tilde{\tau}
_{1, 1}^{(2)})^2 \Big]^2 \nu(d \lambda) \Big)^{1/2}+o_P(1)\\
 &=& \Big( \int_0^1 \lambda^4 \Big[  (\tilde{\tau}_{1, \lambda}^{(1)}-\tau^{(1)}_1+\tau^{(2)}_1-\tilde{\tau}_{1, \lambda}^{(2)})^2  -  (\tilde{\tau}_{1, 1}^{(1)}-\tau^{(1)}_1+\tau^{(2)}_1-\tilde{\tau}_{1, 1}^{(2)})^2 \Big]^2 \nu(d \lambda) \Big)^{1/2}+o_P(1),
\end{eqnarray*}
where we have used the equality of the eigenvalues.
Now  Proposition  \ref{propositionev} in Appendix \ref{appB} shows that    $\hat{V}_{1,N}=o_P(1)$. Finally,   similar, but simpler arguments show that $\hat E_{1,N}=o_P(1)$.

In the case of equality $c^{(1)} =c^{(2)}$ the estimator of the change point assumes some uninformative
value inside the interval  $[\varepsilon, 1-\varepsilon]$ and we obtain
\begin{align*}
\hat{V}_{1,N}^2\le & 4  \sup_\lambda \Big(\lambda^2 (\hat{\tau}_{1, \lambda}^{(1)}-\tau^{(1)}_1+\tau^{(2)}_1-\hat{\tau}_{1, \lambda}^{(2)})^2 \Big)^2 \\
\le & 64  \sup_\lambda\Big(\lambda^2 (\hat{\tau}_{1, \lambda}^{(1)}-\tau^{(1)}_1)^2 \Big)^2
+ 64   \sup_\lambda\Big(\lambda^2 (\tau^{(2)}_1-\hat{\tau}_{1, \lambda}^{(2)})^2 \Big)^2 .
\end{align*}
Consider now the inequality
$$\Big(\lambda^2 (\hat{\tau}_{1, \lambda}^{(1)}-\tau^{(1)}_1)^2 \Big)^2 \le \sup_{\theta \in [\varepsilon, 1 - \varepsilon]}  \Big\| \frac{1}{ \lfloor N \theta \rfloor}\sum_{n=1}^{\lfloor \lfloor N \theta \rfloor \lambda\rfloor} \big( X_n \otimes X_n-c^{(1)} \big) \Big\|^4+o_P(1)$$
which follows from Lemma 2.2 in \cite{HK2012}. The above expression is of order $o_P(1)$
uniformly in $\lambda$ by Lemma B.1 in the supplementary material
of \cite{ARS2018}.  Applying the same argument to the second term yields
that   $\hat{V}_{1,N}=o_P(1)$. The corresponding arguments for the estimate $\hat E_{1,N}=o_P(1)$ are similar and therefore again omitted. \hfill $\Box$

\subsection{Proof of Theorem \ref{theorem2}} \label{proof_thm_2}

Recall the   definition of the eigenfunctions $\hat{v}_{1,\lambda}^{(i)}$  of the
estimated  kernels $\hat c^{(i)}(\lambda, \cdot, \cdot)$ defined in  \eqref{estimatedkernels1} and \eqref{estimatedkernels2}. Similarly as for the proof of Theorem \ref{theorem1} we prove the weak convergence
\begin{equation}
\{H_N(\lambda)\}_{\lambda \in [0,1]}  \to_D  \{\lambda \sigma_D \mathbb{B}(\lambda)\}
_{\lambda \in [0,1]}, \label{weakconvProp2}
\end{equation}
where the process $\{H_N (\lambda) \}_{\lambda \in [0,1]}$ is defined by
\begin{align}
&H_N(\lambda):= \sqrt{N}\Big[ \int_0^1\left(\hat{v}^{(1)}_{1, \lambda}(t)-
\hat{v}^{(2)}_{1, \lambda}(t)\right)^2 \lambda^2 dt - \int_0^1\left(
v^{(1)}_1(t)-v^{(2)}_1(t)\right)^2 \lambda^2 dt \Big]
\end{align}
The result then follows by similar arguments as given in the proof of Theorem \ref{theorem1} and the continuous mapping theorem, which implies the weak convergence of the tuple
$$
\big ( \sqrt{N} (\hat D_{1,N} - D_1), \sqrt{N} \hat U_{1,N}\big)=\Big(H_N(1), \Big[\int_0^1  \{H_N(\lambda) - \lambda^2 H_N(1)\}^2 d\nu(\lambda)
\Big]^{1/2}\Big).
$$
First we replace the estimated by the true change point showing
\begin{align}
\label{replacetheta}
\sqrt{N}\left[ \int_0^1\left(\tilde{v}^{(1)}_{1, \lambda}(t)-
\tilde{v}^{(2)}_{1, \lambda}(t)\right)^2 \lambda^2 dt - \int_0^1 \left(\hat{v}^{(1)}_{1, \lambda}(t)-
\hat{v}^{(2)}_{1, \lambda}(t)\right)^2 \lambda^2 dt \right]=o_P(1)
\end{align}

uniformly in $\lambda \in [0,1]$. To establish this equality we prove
\begin{eqnarray}
\lambda^2 \|\tilde{v}^{(1)}_{1, \lambda}- \hat{v}^{(1)}_{1, \lambda}\| =o_P(1/\sqrt{N}) \label{ProofPro2.4eq1}
\end{eqnarray}
uniformly in $\lambda \in [0,1]$.  \eqref{ProofPro2.4eq1} then follows from   \eqref{replacetheta}
by a simple application  of the Cauchy-Schwarz inequality. Note that
we may confine ourselves to considering $\lambda \in (1/\sqrt{N},1)$, since for $
\lambda \in (0, 1/\sqrt{N})$ the left side of \eqref{ProofPro2.4eq1} is upper
bounded by $2/N$.  To derive \eqref{ProofPro2.4eq1} for $
\lambda>1/\sqrt{N}$  we use  Lemma 2.3 from \cite{HK2012} and obtain

\begin{align}
&\lambda^2 \|\tilde{v}^{(1)}_{1, \lambda}- \hat{v}^{(1)}_{1, \lambda}\| \le  \lambda^2\frac{2 \sqrt{2} \|\tilde{c}^{(1)}(\lambda,
\cdot, \cdot )-\hat{c}^{(1)}(\lambda, \cdot, \cdot )\|}
{\tilde{\tau}^{(1)}_{1, \lambda}-\tilde{\tau}^{(1)}_{2, \lambda}},
\label{upperboundev}
\end{align}
where $\tilde{\tau}^{(1)}_{1, \lambda}$ and $\hat{\tau}^{(1)}_{2, \lambda}$ are the eigenvalues of covariance kernels $\tilde{c}^{(1)}(\lambda, \cdot, \cdot )$ defined in
  \eqref{optimalkernels1}.
We now consider numerator and denominator separately.

Beginning with the denominator we first notice that by consistency of the estimated
eigenvalues $\tilde{\tau}^{(1)}_{1, \lambda}$ and $\tilde{\tau}^{(1)}_{2,
\lambda}$ we have $\tilde{\tau}^{(1)}_{1, \lambda}-\tilde{\tau}
^{(1)}_{2, \lambda}=\tau_1^{(1)}-\tau_2^{(1)}+o_P(1)$. In particular, since $
\tau_1^{(1)}>\tau_2^{(1)}$ by Assumption \ref{assumption2} it is
bounded away from $0$ with probability converging to $1$. To see the consistency
of the eigenvalues, we use the following upper bound
\begin{eqnarray*}
&&|\tilde{\tau}_{1, \lambda}^{(1)} -\tau_1^{(1)}|\le  \| \tilde{c}^{(1)}(\lambda,  \cdot,\cdot) -c^{(1)} (\cdot,\cdot) \|=\mathcal{O}_P(N^{-1/4 }\log(N)),
\end{eqnarray*}
where the second equality follows from Lemma B.1 in the supplementary
material of \cite{ARS2018} and holds uniformly in $\lambda \in (1/\sqrt{N},1)$. Applying the same
argument to the second eigenvalue yields also consistency of $\tilde{\tau}^{(1)}_{2,
\lambda,}$.

In the proof of Proposition \ref{Proposition2} (step 1), we have already shown that the numerator on the right side of
\eqref{upperboundev}  is of order $o_P(1/\sqrt{N})$ and hence \eqref{replacetheta} follows.

We now turn to an investigation of the process
$$ K_N(\lambda) :=  \sqrt{N}\Big[ \int_0^1\left(\tilde{v}^{(1)}
_{1, \lambda}(t)-\tilde{v}^{(2)}_{1, \lambda}(t)\right)^2 \lambda^2
dt - \int_0^1\left( v^{(1)}_1(t)-v^{(2)}_1(t)\right)^2 \lambda^2 dt \Big] ,   $$
for which a simple calculation shows
\begin{eqnarray} \label{eq1pro1}
    K_N(\lambda)&=& \sqrt{N}\Big[ \int_0^1\left( \tilde{v}^{(1)}_{1,
\lambda}(t)-\tilde{v}^{(2)}_{1, \lambda}(t) -v^{(1)}_1(t)+v^{(2)}_1(t)
\right)^2 \lambda^2 dt\Big]\nonumber \\
&+& 2 \sqrt{N}\Big[ \int_0^1\left( v^{(1)}_1(t)-v^{(2)}_1(t)
 \right) \left( \tilde{v}^{(1)}_{1, \lambda}(t)-\tilde{v}^{(2)}_{1, \lambda}(t) -v^{(1)}_1(t)+v^{(2)}_1(t)\right) \lambda^2 dt\Big]  \\
&=& 2\sqrt{N}\Big[ \int_0^1\left( v^{(1)}_1(t)-v^{(2)}_1(t)\right) \left( \tilde{v}^{(1)}_{1, \lambda}(t)-\tilde{v}^{(2)}_{1, \lambda}(t) -v^{(1)}_1(t)+v^{(2)}_1(t)\right) \lambda^2 dt\Big] +o_P(1).\nonumber
 \end{eqnarray}
For the second equality we have used Proposition 2.1 from \cite{auedettrice2019}. In
order to determine the limiting behavior of this expression, we make several
technically helpful transformations beginning with a linearization.\\
Similar calculations as in the proof of Proposition 2.3 in \cite{auedettrice2019} yield the  representation
 \begin{align}
K_N(\lambda)= 2\lambda \sqrt{N} \Big( \frac{1}{\lfloor N \theta_0 \rfloor}
\sum_{n=1}^{\lfloor \lfloor N \theta_0 \rfloor \lambda \rfloor} \bar{X}_n^{(1)} +
\frac{1}{\lfloor N (1-\theta_0) \rfloor} \sum_{n= \lfloor N \theta _0\rfloor +1}^{ \lfloor
N \theta_0 \rfloor +\lfloor \lfloor N (1-\theta_0) \rfloor \lambda \rfloor} \bar{X}
_n^{(2)}\Big) +o_P(1), \label{eq1step2}
\end{align}
where the random variables $\bar{X}_n^{(1)}$ and $\bar{X}_n^{(2)}$ are defined by
\begin{eqnarray}
 \bar{X}_n^{(i)}&:=& \int_0^1\tilde{X}_n^{(i)}(s,t) f^{(i)}(s,t) ds dt \quad i=1,2, \\
 \label{xbar}
\tilde{X}_n^{(i)}(s,t)&=& X_n^{(i)}(s)X_n^{(i)}(t)-\mathbb{E}[X_n^{(i)}(s)X_n^{(i)}(t)] \quad
i=1,2 \label{Xtilde}
\end{eqnarray}
and
$$f^{(i)}(s,t)=-v_1^{(i)}(s)\sum_{k \ge 2} \frac{v_k^{(i)}(t)}{\tau_1^{(i)}-\tau_k^{(i)}}
\int_0^1v_k^{(i)}(u)v_1^{(3-i)}(u)du.$$
Notice that $f^{(i)}\in L^2[0,1]$. Weak convergence of the process $\{K_N(\lambda)\}_{\lambda \in [0,1]}$ defined in \eqref{eq1step2} follows from weak
 convergence of the two dimensional process
\begin{eqnarray}
\Big \{ \frac{\sqrt{N} \lambda}{\lfloor N \theta_0 \rfloor} \sum_{n=1}^{\lfloor \lfloor N
\theta_0 \rfloor \lambda \rfloor} \bar{X}_n^{(1)} , \frac{\sqrt{N} \lambda}{\lfloor N (1-
\theta_0) \rfloor} \sum_{n= \lfloor N \theta _0\rfloor +1}^{ \lfloor N \theta_0 \rfloor +
\lfloor \lfloor N (1-\theta_0) \rfloor \lambda\rfloor} \bar{X}_n^{(2)} \Big\}_{\lambda
\in [0,1]} \label{componentprocess}
 \end{eqnarray}
and the
continuous mapping theorem. \\
 Similar arguments as given in \cite{auedettrice2019} show that the components
of \eqref{componentprocess}
\begin{eqnarray}
\Big( \lambda \frac{\sqrt{N}}{\lfloor N \theta_0 \rfloor} \sum_{n=1}^{\lfloor \lfloor N
\theta_0 \rfloor \lambda \rfloor} \bar{X}_n^{(1)} \Big)_{\lambda  \in [0,1]} \quad
\textnormal{and} \quad \Big( \lambda\frac{\sqrt{N}}{\lfloor N (1-\theta_0) \rfloor}
\sum_{n= \lfloor N \theta _0\rfloor +1}^{ \lfloor N \theta_0 \rfloor +\lfloor \lfloor N (1-
\theta_0) \rfloor \lambda \rfloor} \bar{X}_n^{(2)}\Big)_{\lambda  \in [0,1]}
\nonumber
\end{eqnarray}
converge weakly to stochastic processes of the form $\sigma_D^{(1)} \sqrt{\theta_0}\lambda
\mathbb{B}_1(\lambda) $ and $ \sigma_D^{(2)}\sqrt{1- \theta_0}\lambda \mathbb{B}_2
(\lambda) $ for some suitable constants $\sigma_D^{(1)}$ and $\sigma_D^{(2)}$ (see equation \ref{longrunvariance1} below), where $\mathbb{B}_1$ and $\mathbb{B}_2$ are independent, standard Brownian motions. In
particular both processes are asymptotically tight  and consequently the vector in
\eqref{componentprocess} is also asymptotically tight. To complete the proof of weak
convergence of \eqref{componentprocess},  it therefore remains to prove the
convergence of the finite dimensional distributions.

For this purpose we replace the random variables $\tilde{X}_n^{(i)}$ in \eqref{xbar} by their  $m$-dependent
analogues
\begin{equation}
\bar{X}_{n,m}^{(i)}:= \int_0^1\tilde{X}_{n,m}^{(i)}(s,t) f^{(i)}(s,t) ds dt \quad i=1,2
\label{xmbar}
\end{equation}
where $$\tilde{X}_{n,m}^{(i)} (s,t):=X_{n,m}^{(i)} (s) X_{n,m}^{(i)}(t)-\mathbb{E}[X_{n,m}^{(i)}(s)X_{n,m}^{(i)}(t)] \qquad (i=1,2)$$
and the variables $X_{n,m}^{(i)}$ are defined in \eqref{m-dependentRV}.
The norm of
$$\frac{1}{\sqrt{N}}\sum_{n=1}^{\lfloor \lambda\lfloor N \theta_0 \rfloor \rfloor}
(\bar{X}_n^{(1)}-\bar{X}_{n,m}^{(1)})=\frac{1}{\sqrt{N}}\sum_{n=1}^{\lfloor \lambda
\lfloor N \theta_0 \rfloor \rfloor} \int_0^1\int_0^1(\tilde{X}_{n}^{(1)}(s,t)-\tilde{X}
_{n,m}^{(1)}(s,t))f^{(1)}(s,t)dsdt$$
is   bounded by
 \begin{align*}
&\max_{k=1,...,\lfloor N \theta_0\rfloor } \Big\| \frac{1}{\sqrt{N}} \sum_{n=1}^k
\big(X_n^{(1)} \otimes X_n^{(1)} - \mathbb{E}X_n^{(1)} \otimes X_n^{(1)}-X_{n,m}^{(1)}
\otimes X_{n,m}^{(1)} + \mathbb{E}X_{n,m}^{(1)} \otimes X_{n,m}^{(1)}\big)\Big\|
 \|f^{(1)}\|\\
 =&\max_{k=1,...,\lfloor N \theta_0\rfloor } \Big\| \frac{1}{\sqrt{N}} \sum_{n=1}^k
 \big(X_n^{(1)} \otimes X_n^{(1)}  -X_{n,m}^{(1)} \otimes X_{n,m}^{(1)}\big) \Big\|\|f^{(1)}\|
\end{align*}
and consequently converges to $0$ in probability according to
\eqref{berkeslemma21} if $m\to \infty$. The case $i=2$ can be treated
analogously. Therefore it is sufficient to prove the  convergence of the
finite dimensional distributions of the vector 
$$ \bigg( \frac{\sqrt{N} \lambda}{\lfloor N \theta_0 \rfloor} \sum_{n=1}^{\lfloor
\lfloor N \theta_0 \rfloor \lambda \rfloor} \bar{X}_{n,m}^{(1)} , \frac{\sqrt{N} \lambda}
{\lfloor N (1-\theta_0) \rfloor} \sum_{n= \lfloor N \theta _0\rfloor +1}^{ \lfloor N
\theta_0 \rfloor +\lfloor \lfloor N (1-\theta_0) \rfloor \lambda \rfloor} \bar{X}_{n,m}
^{(2)} \bigg)_{\lambda \in [0,1]}, $$
which can be shown in the same way as in the proof of Step 2 in Proposition \ref{proposition2a}. Finally, we define
\begin{align}
(\sigma^{(i)}_D)^2= \mathbb{V}ar(\bar{X_0}^{(i)})+2\sum_{l \ge 1} \mathbb{C}ov
(\bar{X}_{0}^{(i)},\bar{X}_{l}^{(i)})>0 \qquad (i=1,2) \label{longrunvariance1}
\end{align}
(these quantities are positive
  by assumption) and obtain
\begin{align*}
&\Big( \frac{\sqrt{N} \lambda}{\lfloor N \theta_0 \rfloor} \sum_{n=1}^{\lfloor \lfloor N \theta_0 \rfloor \lambda \rfloor} \bar{X}_n^{(1)} , \frac{\sqrt{N} \lambda}{\lfloor N (1-\theta_0) \rfloor} \sum_{n= \lfloor N \theta _0\rfloor +1}^{ \lfloor N \theta_0 \rfloor +\lfloor \lfloor N (1-\theta_0) \rfloor \lambda \rfloor} \bar{X}_n^{(2)} \Big)_{ \lambda \in [0,1]} \\
\to_D& \left(   \sigma_D^{(1)}\lambda\mathbb{B}_1(\lambda)/\sqrt{\theta_0}, \sigma_D^{(2)} \lambda\mathbb{B}_2(\lambda)/\sqrt{1-\theta_0} \right)_{ \lambda \in [0,1]}.
\end{align*}
The continuous mapping theorem gives
\begin{align*}
& \left( \sqrt{N}\left[ \int_0^1\left(\hat{v}^{(1)}_{1,  \lambda}(t)-\hat{v}^{(2)}_{1,  \lambda}(t)\right)^2 \lambda^2 dt - \int_0^1\left( v^{(1)}_1(t)-v^{(2)}_1(t)\right)^2 \lambda^2 dt \right]\right)_{\lambda \in [0,1]} \\
 \to_D & \left( 2[\lambda\sigma_D^{(1)}/\sqrt{\theta_0} \mathbb{B}_1(\lambda)+\lambda\sigma_D^{(2)}/\sqrt{1-\theta_0}\mathbb{B}_2(\lambda)]\right)_{\lambda \in [0,1]}.
\end{align*}

Now the same steps as in the proof of Theorem \ref{theorem1} yield the desired result. \hfill $\Box$

\section{Technical details and supplementary results}
\label{appB}
 \def\theequation{B.\arabic{equation}}
\setcounter{equation}{0}

\subsection{Weak convergence of the covariance kernel}
In this section we provide an adaption of   Theorem 1.1  in \cite{Berkes2013} to the estimation of covariance kernels. Let $(X_n)_{n \in \mathbb{Z}}$ be a sequence of random functions satisfying Assumption \ref{assumption1} and consider   the sequential process
\begin{equation}
 S_N(x,s,t):= \frac{1}{\sqrt{N}}\sum_{n=1}^{\lfloor N x\rfloor} \left\{X_n(s) X_n(t)-\mathbb{E}X_n(s) X_n(t) \right\} \label{S_N}.
 \end{equation}
Thus we are interested in a sum of random elements $X_n \otimes X_n \in L^2([0,1]\times [0,1])$.
These products can be approximated by
products of the $m$-dependent random functions $X_{n,m} \otimes  X_{n,m}$, where  the
random variables  $X_{n,m}$ are  defined in Assumption \ref{assumption1} (note that $X_n$ and $X_{n,m}$ have the same distribution).
Using  Assumption \ref{assumption1}  and the notation $\delta'=\delta/2$ and $\kappa'=\kappa/2$
we obtain for a suitable constant $K>0$
\begin{eqnarray*}
&&\sum_{m \ge 1} \left( \mathbb{E} \|X_n \otimes X_n-\mathbb{E}X_n \otimes X_n -X_{n,m} \otimes X_{n,m}+\mathbb{E}X_{n,m} \otimes X_{n,m}\|^{2+\delta'}\right)^{1/\kappa'}\\
&=& \sum_{m \ge 1} \left( \mathbb{E} \|X_n \otimes X_n -X_{n,m} \otimes X_{n,m}\|^{2+\delta'}\right)^{1/\kappa'}
 \\
& \le & K\sum_{m \ge 1}\left( \mathbb{E}\|X_n\|^{4+2\delta'} \mathbb{E} \|X_n-X_{n,m}\|^{4+2\delta'} \right)^{1/(2\kappa')}\\
&+&
K\sum_{m\ge 1}\left( \mathbb{E}\|X_{n,m}\|^{4+2\delta'} \mathbb{E} \|X_n-X_{n,m}\|^{4+2\delta'} \right)^{1/(2\kappa')}\\
&=&2K\left( \mathbb{E}\|X_0\|^{4+2\delta'}\right)^{1/(2\kappa')} \sum_{m \ge 1}\left( \mathbb{E} \|X_n-X_{n,m}\|^{4+2\delta'} \right)^{1/(2\kappa')}<\infty.                                                                                                                                                                                                                                                                                                                                                                                                              \end{eqnarray*}
This consideration demonstrates that we have the same approximation properties as required in \cite{Berkes2013}  for the random functions $X_n \otimes X_n$.

The corresponding ``long run covariance kernel'' for the sequence $(X_n \otimes X_n)_{n \in \mathbb{Z}}$ can be defined as
\begin{align*}
 C(q,r,s,t):=\mathbb{E}[X_0(q)X_0(r)X_0(s)X_0(t)]+&\sum_{n \ge 1} \mathbb{E}[X_0(q)X_0(r)X_n(s)X_n(t)]\\
 +&\sum_{n \ge 1} \mathbb{E}[X_n(q)X_n(r)X_0(s)X_0(t)].
\end{align*}
By analogous arguments as presented in Lemma 2.2  of  \cite{Berkes2013}, it can be observed that $C$ is square integrable and thus defines a Hilbert-Schmidt operator (see e.g. \cite{Bu1998} p.168). It thus follows that there exists a spectral decomposition of the integral operator with kernel $C$. Let us call its eigenfunctions $\Psi_1, \Psi_2,...$ and its corresponding eigenvalues $\Lambda_1, \Lambda_2,...$. With this eigensystem we can define the Gaussian process
\begin{align*}
\Gamma(x,s,t):=\sum_{1 \le l} \Lambda_{l} \mathbb{B}_{l}(x)\Psi_{l}(s,t),
 \end{align*}
 where $\mathbb{B}_{l}$ are independent Brownian motions for all $l\ge 1$ and $x \in [0,1]$. We now state an analogue of Berkes' Theorem 1.1. The proof runs along the same lines as in \cite{Berkes2013} and is therefore omitted.

\begin{theorem} \label{thmapp1}
Suppose Assumptions \ref{assumption1} hold. Then there exists for every $N$ a Gaussian process $\Gamma_N=_D\Gamma$ such that
$$\sup_x \int_0^1\int_0^1\left( S_N(x, t, s) - \Gamma_N(x, s,t)\right)^2 ds dt =o_P(1).$$
 \end{theorem}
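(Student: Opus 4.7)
The plan is to reduce the claim to a direct application of Theorem 1.1 of \cite{Berkes2013}, applied not to the base sequence $(X_n)_{n\in\mathbb{Z}}$ but to the tensor-product sequence $(X_n\otimes X_n)_{n\in\mathbb{Z}}$ viewed as a stationary process in $L^2([0,1]\times[0,1])$. The partial sum process $S_N$ is, up to centering, exactly the sequential partial sum of $X_n\otimes X_n$ normalized by $\sqrt{N}$, so once the hypotheses of \cite{Berkes2013} are verified for this derived sequence, the strong approximation by the Gaussian process $\Gamma_N$ built from the eigensystem $(\Lambda_l,\Psi_l)$ of the long-run covariance operator of $X_0\otimes X_0$ follows directly.

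The verification splits into three ingredients. First, stationarity and centeredness of $X_n\otimes X_n - \mathbb{E}[X_n\otimes X_n]$ are immediate from Assumption \ref{assumption1}, and the Bernoulli shift structure of $(X_n)$ transfers to $(X_n\otimes X_n)$ through the measurable map $x\mapsto x\otimes x$. Second, the moment condition required by \cite{Berkes2013} in the exponent $2+\delta'$ with $\delta'=\delta/2$ is satisfied, since
\[
\mathbb{E}\|X_n\otimes X_n\|^{2+\delta'} = \mathbb{E}\|X_n\|^{4+\delta} < \infty
\]
by Assumption \ref{assumption1}(3). Third, the summability condition in \cite{Berkes2013} for the $m$-approximating sequence $X_{n,m}\otimes X_{n,m}$ must be checked with the exponent $\kappa'=\kappa/2$; this is precisely the computation carried out in the paragraph preceding the theorem statement, where Cauchy--Schwarz applied to
\[
\|X_n\otimes X_n - X_{n,m}\otimes X_{n,m}\| \le \|X_n\|\,\|X_n-X_{n,m}\| + \|X_{n,m}\|\,\|X_n-X_{n,m}\|
\]
together with the fourth-moment bound on $X_n$ reduces the $(2+\delta')$-th moment of the tensor-product difference to the $(4+\delta)$-th moment of $\|X_n-X_{n,m}\|$, whose $1/\kappa$-th powers are summable by \eqref{h2}.

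Once these three ingredients are in place, Theorem 1.1 of \cite{Berkes2013} yields, on a possibly enlarged probability space, a centered Gaussian process $\Gamma_N$ with the same distribution as $\Gamma$ such that
\[
\sup_{x\in[0,1]} \Big\|S_N(x,\cdot,\cdot) - \Gamma_N(x,\cdot,\cdot)\Big\| = o_P(1),
\]
which is exactly the assertion of Theorem \ref{thmapp1}. Identification of the covariance structure of the limit process as the operator with kernel $C(q,r,s,t)$ given in the preamble---and hence of its spectral decomposition in terms of $(\Lambda_l,\Psi_l)$---follows from a direct covariance calculation for the partial sums and from Hilbert--Schmidt theory for $C$, established as in Lemma 2.2 of \cite{Berkes2013}.

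The only nontrivial step is the $m$-approximability bookkeeping: one must track exponents carefully so that the slightly weaker summability demanded by \cite{Berkes2013} in the $(2+\delta')$-th moment with $\kappa'$-th root is implied by the stronger $(4+\delta)$-th moment with $\kappa$-th root summability imposed in \eqref{h2}. This is precisely why Assumption \ref{assumption1} is stated with the stronger exponents: the increased regularity is spent exactly once, when passing from $X_n$ to $X_n\otimes X_n$. Everything else is either routine or an exact quotation of \cite{Berkes2013}, justifying the authors' decision to omit the detailed proof.
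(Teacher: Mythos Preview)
Your proposal is correct and matches the paper's approach exactly: the paper verifies in the paragraph preceding the theorem that the centered tensor-product sequence $X_n\otimes X_n-\mathbb{E}[X_n\otimes X_n]$ satisfies the moment and $m$-approximability hypotheses of \cite{Berkes2013} with exponents $\delta'=\delta/2$ and $\kappa'=\kappa/2$, then states that the proof ``runs along the same lines as in \cite{Berkes2013}'' and omits it. You have simply spelled out what that sentence means, including the exponent bookkeeping, which is indeed the only nontrivial step.
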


\subsection{Eigenvalue-expansion}

In this section we investigate a stochastic linearization of the estimated eigenvalues of the empirical covariance operator. For this purpose let $(X_n)_{n \in \mathbb{Z}}$ be a stationary, functional time series, with vanishing mean function, that complies to the Assumptions \ref{assumption1}  and  \ref{assumption2}. We call the corresponding covariance kernel $c$, its eigenvalues $\tau_1 \ge \tau_2 \ge...$ and its eigenfunctions $v_1, v_2,...$. For the data sample $X_1,...,X_N$ we define the sequential estimator of the covariance kernel
\begin{equation}
\hat c(\lambda, s,t):= \frac{1}{\lfloor N \lambda \rfloor} \sum_{n=1}^{\lfloor N \lambda \rfloor} X_n(s) X_n(t). \label{ApBkernel}
\end{equation}

Its eigenvalues are denoted by $\hat \tau_{1, \lambda} \ge \hat \tau_{2, \lambda} \ge...$ and its eigenfunctions by $\hat v_{1,\lambda}, \hat v_{2,\lambda},...$.

\begin{proposition} \label{propositionev}
Under the above assumptions we have for any $j \in \{1,...,p\}$
\begin{align}
\sup_{\lambda \in [0,1] } \Big|\lambda (\th-\t)-\frac{1}{\sqrt{N }} \int_0^1 \hat{Z}_N (s,t,\lambda)  v_j (s)  v_j (t) ds dt \Big|=o_P(1/\sqrt{N}), \label{eq1}
\end{align}
where
$$ \hat{Z}_N (s,t,\lambda):= \frac{1}{\sqrt{N}} \sum_{n=1}^{\lfloor N \lambda \rfloor} (X_n (s)X_n (t) - c (s,t)).$$
Furthermore, with $\kappa$ defined in Assumption \ref{assumption1}, 4. we have
\begin{equation} \label{H4}
 \sup_{\lambda \in [0,1] } |\sqrt{\lambda} (\th-\t)|=\mathcal{O}_P(\log(N)^{1/\kappa}/\sqrt{N}).
 \end{equation}
\end{proposition}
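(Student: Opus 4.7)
The plan is to apply the standard perturbation expansion for simple eigenvalues of compact self-adjoint operators to the integral operators with kernels $c$ and $\hat c(\lambda,\cdot,\cdot)$, and then to control the remainder uniformly in $\lambda$. Since Assumption \ref{assumption2} guarantees that $\tau_j$ is simple with positive spectral gap $\gamma_j:=\min_{k\neq j}|\tau_j-\tau_k|$, classical perturbation theory (see e.g.\ \cite{dauxois1982asymptotic}, or Lemmas 2.2--2.3 in \cite{HK2012}) gives, on the event that $\|\hat c(\lambda,\cdot,\cdot)-c\|$ is small enough, the expansion
\begin{equation*}
\hat\tau_{j,\lambda}-\tau_j=\int_0^1\!\!\int_0^1\bigl(\hat c(\lambda,s,t)-c(s,t)\bigr)v_j(s)v_j(t)\,ds\,dt+R(\lambda),
\end{equation*}
with $|R(\lambda)|\le C_\gamma\|\hat c(\lambda,\cdot,\cdot)-c\|^2$. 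Multiplying by $\lambda$ and using the identity $\lambda\bigl(\hat c(\lambda,s,t)-c(s,t)\bigr)=N^{-1/2}\hat Z_N(s,t,\lambda)+\rho_N(\lambda,s,t)$, where $\|\rho_N(\lambda,\cdot,\cdot)\|\le\|\hat Z_N(\cdot,\cdot,\lambda)\|/(\sqrt N\,\lfloor N\lambda\rfloor)$ arises from the gap between $\lambda\sqrt N/\lfloor N\lambda\rfloor$ and $1/\sqrt N$, reduces \eqref{eq1} to showing that $\sup_\lambda|\lambda R(\lambda)|$ and $\sup_\lambda\|\rho_N(\lambda,\cdot,\cdot)\|$ are both $o_P(N^{-1/2})$.

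The key technical ingredient is the uniform bound $\sup_\lambda\|\hat Z_N(\cdot,\cdot,\lambda)\|=\mathcal O_P(\log(N)^{1/\kappa})$, which I would derive by combining the strong Gaussian approximation of Theorem \ref{thmapp1} with a standard modulus-of-continuity bound for the limiting process $\Gamma_N$; the logarithmic factor matches the moment condition in part 4 of Assumption \ref{assumption1}. Via the identity $\|\hat c(\lambda,\cdot,\cdot)-c\|=\sqrt N\,\|\hat Z_N(\cdot,\cdot,\lambda)\|/\lfloor N\lambda\rfloor$, this yields, on the range $\lambda\ge C/N$ where $\lfloor N\lambda\rfloor\asymp N\lambda$, the estimate $\sup_{\lambda\ge C/N}\lambda\|\hat c(\lambda,\cdot,\cdot)-c\|^2=\mathcal O_P(\log(N)^{2/\kappa}/N)=o_P(N^{-1/2})$, and an analogous bound on $\sup_{\lambda\ge C/N}\|\rho_N\|$. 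On the complementary range $\lambda<C/N$, the partial sums defining $\hat c(\lambda,\cdot,\cdot)$ and $\hat Z_N(\cdot,\cdot,\lambda)$ contain only a bounded number of summands, so both sides of the claimed identity are directly of order $\mathcal O_P(N^{-1})$. This proves \eqref{eq1}.

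The rate statement \eqref{H4} follows as an immediate corollary: Weyl's inequality (Lemma 2.2 in \cite{HK2012}) yields $|\hat\tau_{j,\lambda}-\tau_j|\le\|\hat c(\lambda,\cdot,\cdot)-c\|$, so that $\sqrt\lambda\,|\hat\tau_{j,\lambda}-\tau_j|\le\sqrt{\lambda N}\,\|\hat Z_N(\cdot,\cdot,\lambda)\|/\lfloor N\lambda\rfloor\le 2\|\hat Z_N(\cdot,\cdot,\lambda)\|/\sqrt N$ for $\lambda\ge 2/N$; taking the supremum and invoking the uniform bound on $\|\hat Z_N\|$ gives $\mathcal O_P(\log(N)^{1/\kappa}/\sqrt N)$, while the region $\lambda<2/N$ is trivial. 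The main obstacle throughout is the uniform-in-$\lambda$ control down to the origin, where the pointwise rate $(N\lambda)^{-1/2}$ for $\|\hat c(\lambda,\cdot,\cdot)-c\|$ diverges and the spectral-gap assumption may fail for the empirical operator; the case split at $\lambda\asymp 1/N$, together with the Gaussian approximation of Theorem \ref{thmapp1}, is exactly what enables the clean $o_P(N^{-1/2})$ conclusion.
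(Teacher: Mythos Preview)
Your overall strategy---linearize the eigenvalue via a first-order perturbation expansion and control the quadratic remainder uniformly in $\lambda$---is the same as the paper's. But there is a real gap in the key estimate. You claim that $\sup_\lambda\|\hat Z_N(\cdot,\cdot,\lambda)\|=\mathcal O_P(\log(N)^{1/\kappa})$ together with the identity $\|\hat c(\lambda,\cdot,\cdot)-c\|=\sqrt N\,\|\hat Z_N\|/\lfloor N\lambda\rfloor$ yields $\sup_{\lambda\ge C/N}\lambda\|\hat c(\lambda,\cdot,\cdot)-c\|^2=\mathcal O_P(\log(N)^{2/\kappa}/N)$. It does not: substituting gives $\lambda\|\hat c-c\|^2\asymp\|\hat Z_N(\lambda)\|^2/(N\lambda)$, and taking the supremum down to $\lambda=C/N$ with only the unscaled bound on $\|\hat Z_N\|$ produces $\mathcal O_P(\log(N)^{2/\kappa})$, not $\mathcal O_P(\log(N)^{2/\kappa}/N)$. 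What you actually need is the \emph{scaled} bound
\[
\sup_{\lambda\in[0,1]}\sqrt\lambda\,\|\hat c(\lambda,\cdot,\cdot)-c\|=\mathcal O_P\bigl(\log(N)^{1/\kappa}/\sqrt N\bigr),
\]
equivalently $\sup_\lambda\|\hat Z_N(\lambda)\|/\sqrt\lambda=\mathcal O_P(\log(N)^{1/\kappa})$. This does \emph{not} follow from Theorem~\ref{thmapp1}: the Gaussian approximation there has uniform error only $o_P(1)$, so dividing by $\sqrt\lambda$ destroys the control near the origin. The required bound is a separate maximal inequality---Lemma~B.1 in the supplement of \cite{ARS2018}---which the paper invokes directly. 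For the same reason your case split at $\lambda\asymp 1/N$ does not rescue the perturbation expansion: for $\lambda$ just above $1/N$ the norm $\|\hat c(\lambda,\cdot,\cdot)-c\|$ is still $\mathcal O_P(1)$, so the quadratic-remainder form of the expansion (which needs the perturbation below the spectral gap) is not guaranteed.

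The paper sidesteps both issues by telescoping explicitly: writing $\hat\tau_{j,\lambda}=\langle\hat c(\lambda,\cdot,\cdot),\hat v_{j,\lambda}\otimes\hat v_{j,\lambda}\rangle$ and decomposing $\hat v_{j,\lambda}\otimes\hat v_{j,\lambda}-v_j\otimes v_j$ into pieces bounded by $\hat\tau_{j,\lambda}\|\hat v_{j,\lambda}-v_j\|^2$, $\|\hat c-c\|\,\|\hat v_{j,\lambda}-v_j\|$, and $\tau_j\|\hat v_{j,\lambda}-v_j\|^2$. These are then controlled by combining the ARS2018 Lemma~B.1 rate above with the eigenfunction rate $\sup_\lambda\lambda\|\hat v_{j,\lambda}-v_j\|^2=o_P(1/\sqrt N)$ from Proposition~2.1 of \cite{auedettrice2019}. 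Both of these hold for all $\lambda\in[0,1]$ without any spectral-gap condition on the empirical operator, which is what makes the argument uniform. Your approach becomes correct once you replace the unscaled $\sup_\lambda\|\hat Z_N\|$ bound by the scaled one from \cite{ARS2018}; the rest of your outline then matches the paper's argument.
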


\begin{proof} 
We begin by noticing that
\begin{equation}
 \sup_{\lambda \in [0,1] }\Big| \frac{\lfloor N \lambda \rfloor}{N}\int_0^1 \int_0^1 \cx \vjs \vjt ds dt-  \lambda \t \Big| = \Big| \frac{\lfloor N \lambda\rfloor}{N} \t- \lambda \t\Big|= \mathcal{O}(1/N). \label{ApB2eq1}
\end{equation}
Recalling the definition of $\cxh$as defined in \eqref{ApBkernel}, 
we use \eqref{ApB2eq1} to rewrite \eqref{eq1}  as
\begin{align*}
& \sup_{\lambda \in [0,1] } \lambda \Big | \th -\int_0^1 \int_0^1 \cxh \vjs \vjt ds dt  \Big|+\mathcal{O}_P(1/N)\\
=& \sup_{\lambda \in [0,1] } \lambda \Big| \int_0^1 \int_0^1 \cxh [\vhs \vht-\vjs \vjt] ds dt  \Big|+\mathcal{O}_P(1/N)\\
\le & \sup_{\lambda \in [0,1] } \lambda \Big| \int_0^1 \int_0^1 \cxh [\vhs \vht-\vhs \vjt] ds dt  \Big|\\
+&\sup_{\lambda \in [0,1] }  \lambda \Big| \int_0^1 \int_0^1 \cxh [\vhs \vjt-\vjs \vjt] ds dt  \Big|+\mathcal{O}_P(1/N)\\
=&A+B+ \mathcal{O}_P(1/N),
\end{align*}
where the last equality defines the terms $A$ and $B$ in an obvious way.
We now investigate the terms $A$ and $B$ separately. For the term $A$ we observe that $\hat v_j$ is the eigenfunction of the integral operator associated with $\hat c $, which gives
\begin{align*}
A=& \sup_{\lambda \in [0,1] } \lambda\left| \th \int_0^1 \vht [\vht -\vjt]dt\right|\\
=& \sup_{\lambda \in [0,1] } \th  \left\| \hat{v}_{j, \lambda} -v_j  \right\|^2 \lambda/2 = o_P(1/\sqrt{N}).
\end{align*}
Here the second equality follows by the parallelogram law and  in the last step we used the estimate
\begin{equation}\label{det19}
\sup_{\lambda \in [0,1]} \left\| \hat{v}_{j, \lambda} -v_j  \right\|^2  \lambda = {o}_P (1/\sqrt{N})
\end{equation}
 (see Proposition 2.1 in \cite{auedettrice2019}) and
$$ \sup_{\lambda \in [0,1] } \th \le \sup_{\lambda \in [0,1] } (|\th-\t|+\t)=\mathcal{O}_P(1), $$
according to  Lemma B.1 in the supplementary material for \citep{ARS2018}. Turning to   term $B$ we see that
\begin{align*}
B \le &  \sup_{\lambda \in [0,1] }\lambda  \Big| \int_0^1 \int_0^1 \left[\cxh - \cx \right]\vjt [\vhs-\vjs]ds dt\Big| \\
+& \sup_{\lambda \in [0,1] }\lambda \Big| \int_0^1 \int_0^1  \cx \vjt [\vhs-\vjs]ds dt\Big| =: R_1+R_2,
\end{align*}
where the last equality defines the random variables $R_1$ and $R_2$ in an obvious manner.
For the term $R_1$ we obtain by the   Cauchy-Schwarz inequality
\begin{align*}
R_1 \le & \sup_{\lambda \in [0,1] } \sqrt{\lambda}\left\| \hat{c} (\cdot, \cdot, \lambda)-c \right\| \|v_j \| (\sqrt{\lambda}\|\hat{v}_{j, \lambda} -v_j \| ).
\end{align*}
Again by Lemma B.1 from the supplement of  \citep{ARS2018} we observe that
$$ \sup_{\lambda \in [0,1] } \sqrt{\lambda}\left\| \hat{c} (\cdot, \cdot, \lambda)-c \right\| = \mathcal{O}(\log^{1/\kappa}/\sqrt{N}).$$
\eqref{det19} shows that
 $R_1 = o_P(1/\sqrt{N})$.  We use  similar arguments and obtain
\begin{align*}
R_2=& \sup_{\lambda \in [0,1] } \lambda \t \left|\int_0^1 \vjt [\vht - \vjt] dt\right| =  \sup_{\lambda \in [0,1] } \left( \lambda\|\hat{v}_{j, \lambda} -v_j \|^2 \right) \t/2=o_P(1/\sqrt{N}).
\end{align*}
Combining these considerations proves the first assertion \eqref{eq1}.
For  a proof of  \eqref{H4} we note that
$$\sup_{\lambda \in [0,1] } |\sqrt{\lambda} (\th-\t)| \le\sup_{\lambda \in [0,1] } \Big\|\frac{\sqrt{\lambda}}{{\lfloor N \lambda \rfloor} }\sum_{n=1}^{\lfloor N \lambda \rfloor} X _n\otimes X _n-c \Big\|+\mathcal{O}_P(N^{-1})=\mathcal{O}_P(\log(N)^{1/\kappa}N^{-1/2}).$$
The first inequality follows from  bounding the eigenvalue distance by the operator distance and this again by the $L^2$-distance of the kernels. The second one follows by a Lemma B.1 in the supplementary material for \citep{ARS2018}.

\end{proof}

\noindent {\bf Acknowledgements}
This work has been supported in part by the
Collaborative Research Center ``Statistical modeling of nonlinear
dynamic processes'' (SFB 823, Project A1,  C1) of the German Research Foundation (DFG). 
\bibliographystyle{apalike}
\setlength{\bibsep}{1pt}
\begin{small}

\end{small}

 \end{document}